\documentclass[12pt]{amsart}
\usepackage[colorlinks=true, pdfstartview=FitV, linkcolor=blue, citecolor=blue, urlcolor=blue]{hyperref}

\usepackage{amssymb,amsmath, amscd, amsfonts}
\usepackage{times, verbatim}
\usepackage{graphicx}
\usepackage[english]{babel}
 
\usepackage{nicematrix}
\usepackage{enumerate}
\usepackage{ulem}
\usepackage{anysize}
\marginsize{3cm}{3cm}{3cm}{3cm}

\usepackage[all]{xy}
\input xy
\xyoption{all}

\usepackage{mathtools}
\usepackage{leftidx} 
\usepackage{lipsum} 


\theoremstyle{plain}
\newtheorem{theorem}{Theorem}[section]

\newtheorem{lemma}[theorem]{Lemma}

\newtheorem{corollary}[theorem]{Corollary}

\theoremstyle{definition} \theoremstyle{definition}
\newtheorem{remark}[theorem]{Remark}
\newtheorem{example}[theorem]{Example}

\theoremstyle{remark}

\newcommand{\Z}{\mathbb{Z}}
\newcommand{\C}{\mathbb{C}}

\newcommand{\Hom}{{\rm Hom}}

\def\G{{\rm G}}

\def\Sp{{\rm Sp}}

\def\GL{{\rm GL}}

\def\SO{{\rm SO}}

\def\Sym{{\rm Sym}}

\newcommand{\pilam}{\Pi_{\lambda}}

\newcommand{\pimu}{\Pi_{\mu}}

\begin{document}

\title[New Perspective of Pieri Rule]
{Pieri Rule for Classical Groups, A New Perspective}

\begin{abstract}
We study a new perspective on a certain Pieri rules for classical groups. Furthermore, we extend a fundamental theorem of Kostant concerning tensor products for classical groups. We show that a certain form of the Pieri rule is equivalent to the converse of this extended version of Kostant’s theorem. In addition, we show an equivalence between the Pieri rule and the branching rule for general linear groups. 
\end{abstract}

\author{Dibyendu Biswas}

\address{
	Chennai Mathematical Institute, Chennai, India, 603103}

\email{dibubis@gmail.com, dibyendub@cmi.ac.in}

\subjclass[2020]{20G05, 17B10, 05E10}
\keywords{Classical groups, Pieri rule, Tensor product, Kostant theorem, Branching rule, Parabolic subgroup}

\maketitle
    {\hfill \today}
    
\tableofcontents
\section{Introduction}

The Pieri rule describes the decomposition of tensor products of irreducible representations with symmetric or exterior power of defining representations. It is originally formulated in the context of symmetric functions and the representation theory of the general linear groups.
 For general linear groups (in type $A$), it states that the product of a Schur function $s_\lambda$ with a complete (or elementary) symmetric function equals as a sum of Schur functions indexed by partitions obtained by adding a horizontal (or vertical) strip. This rule underlies the Littlewood--Richardson theory and has implications in algebraic combinatorics, representation theory, and geometry.
For classical groups of types $B$, $C$, and $D$, Pieri-type rules describes the decomposition of tensor products of irreducible representations with symmetric (exterior) power of defining representations, by removing (adding) and adding (removing) certain horizontal (vertical) strips. See an explicit version for the case of exterior powers in Theorem~\ref{thm pieri dp2}.

Pieri's rule for the classical groups was established by Sundaram. She provided a combinatorial proof for the symplectic group in \cite[Theorem 4.1]{Sunsp} and for the orthogonal groups in \cite[Theorem 5.3]{Sunortho}. Soichi Okada also gave proof of Pieri's rule using  the universal character ring and specialization homomorphism in \cite{SO}.

The Pieri rule has deep connections across several areas of mathematics. Geometrically, it describes the multiplication of Schubert classes on the Grassmannian, and was reformulated algebraically by Chevalley \cite{Chevalley1951} within the cohomology ring of flag varieties as a special case of his formula for the product of a Schubert class with a Schubert divisor. Subsequent developments by Bott and Samelson \cite{BottSamelson1958} introduced a geometric framework through their study of Bott--Samelson resolutions, and later work by Buch, Kresch, and Tamvakis \cite{BuchKreschTamvakis2006} extended Pieri-type formulas to isotropic Grassmannians of types $B, C$ and $D$.

From the representation-theoretic viewpoint, the Pieri rule governs the decomposition of the tensor product of a highest weight representation with a fundamental representation. This perspective fits naturally within the broader frameworks of Littelmann’s path model \cite{Littelmann1994, Littelmann1995} and Kashiwara’s crystal basis theory \cite{Kashiwara1990, Kashiwara1995}, which provide uniform combinatorial and geometric approaches to tensor product decompositions across all types, and even for Kac--Moody algebras.

All results in this paper are over $\C$, and all groups considered are connected and reductive. We study a new perspective on a certain Pieri rules (in Theorem~\ref{thm pieri dp2}) for the classical groups. Our proof of Theorem~\ref{thm pieri dp2} arises from a more general result, Theorem~\ref{thm pieri dp}, which extends a certain form of the Pieri rule (Theorem~\ref{thm ext dp}) for the general linear groups to arbitrary connected reductive groups.
Theorem~\ref{thm pieri dp} gives an explicit description of the tensor product $\Pi_{\lambda} \otimes U$ whenever 
$\lambda$ lies sufficiently deep in the dominant chamber, precisely  $\lambda + \rho + \mu$ is a dominant weight for all weights $\mu$ in the representation $U$.  As it reduces tensor product problems to the weight structure of $U$ and provides a uniform method for all connected reductive groups 
for controlling highest weights of irreducible constituents appeared in tensor product in various contexts, including translation principles 
\cite{Kostant1963,Littelmann1995} and the study of tensor product asymptotics \cite{Heckman1982}. Here the translation principle refers to the method of relating representations with different highest weights by shifting them across walls of the Weyl chambers, while asymptotic questions concern the behavior of tensor product multiplicities under scaling of highest weights.

Furthermore, we extend a fundamental theorem of Kostant on tensor product decompositions (Theorem~\ref{thm kostant initial}) of classical groups and obtain a more general theorem, Theorem~\ref{thm gbt parabolic}, which we refer to as the extended Kostant theorem. Theorem~\ref{thm gbt parabolic} provides a parabolic generalization of 
Kostant's classical constraint on tensor product multiplicities. 
It shows that any irreducible representation of 
$G$ occurring in $\Pi_{\lambda} \otimes \Pi_{\mu}$
must already arise, in a precise sense, within the tensor product of the corresponding 
$M$-modules, where $M$
is the Levi factor of a parabolic subgroup.
We use this theorem to make an observation on representations of classical groups.  
  We show, in Theorem~\ref{thm kostant parabolic} and Remark~\ref{rem equivalent to CKT}, that a certain form of the Pieri rule is in fact equivalent to the converse of this extended Kostant theorem for symplectic groups.
 
 In Subsection~\ref{subsec equiv pieri branch}, we establish an equivalence between the Pieri rule and the branching rule for general linear groups. This equivalence is formulated explicitly in equation~\eqref{eqn: equivalance equation}. We show that this relationship can be naturally interpreted as a reciprocity phenomenon, which is stated in equation~\eqref{eqn: reciprocity}.
 We prove this equivalence using a simplest case of Howe duality, precisely Corollary~\ref{cor Howe}, using the natural action of $\GL(n, \C) \times \GL(m, \C)$, $n \leq m$, on the space of polynomial functions on the space of $n \times m$ matrices.

The main results of this paper are Theorem~\ref{thm pieri dp2}, Theorem~\ref{thm gbt parabolic}, Theorem~\ref{thm kostant parabolic}.
An equivalence between the Pieri rule and the branching rule for the general linear groups in subsection~\ref{subsec equiv pieri branch}, is another significant result of the paper. 
	We mention that a version of Theorem~\ref{thm pieri dp} is known in the literature, it does not occur in the precise form that we present here. 
	Theorem~\ref{thm pieri dp} is significant in that it provides a particular instance in which the converse of Kostant’s theorem holds strongly, as noted in Remark~\ref{rem strong ckt}. See also part (2) of Remark~\ref{rem ckt in pieri}. At present, only this special cases of the converse of Kostant are known, and a general characterization of when this converse holds remains an open problem.

In particular, the extended Kostant Theorem~\ref{thm gbt parabolic} suggests that Pieri-type rules, branching phenomena, and known cases of the converse to Kostant’s theorem may be understood from a common Levi-theoretic viewpoint.
 It shows how Levi subgroups govern tensor product multiplicities. We hope that this perspective will lead to further connections with recent work on eigencones, Horn-type inequalities, and parabolic refinements, and other ongoing developments in the subject \cite{Atiyah1982,Klyachko1998,KapovichMillson2005,BelkaleKumar2006,sagar}.

\section{Preliminaries}
A partition is a weakly decreasing sequence
$
\lambda=(\lambda_1,\lambda_2,\ldots)
$
of nonnegative integers with finite sum, i.e.,
$
\sum_{i\ge 1}\lambda_i<\infty.
$
To a partition $\lambda$ one associates its Young (Ferrers) diagram, in which the $i$-th row has $\lambda_i$ left-justified boxes. The conjugate partition $\lambda^t=(\lambda_1',\ldots,\lambda_r')$ is obtained by interchanging rows and columns of the Young diagram (equivalently, by reflection across the $45^\circ$ line).
Let $\lambda=(\lambda_i)_{i\ge 1}$ be a partition.
The length $l(\lambda)$ is the largest index $s$ such that $\lambda_s\neq 0$.
The size $|\lambda|$ is
$
|\lambda|=\sum_{i\ge 1}\lambda_i \;=\; \sum_{i=1}^{l(\lambda)}\lambda_i.
$
Let $\mu$ and $\lambda$ be partitions with $\mu\subset \lambda$ (i.e.\ $\mu_i\le \lambda_i$ for all $i$).
The skew diagram $\lambda/\mu$ is the set-theoretic difference of the Young diagrams of $\lambda$ and $\mu$.
Its size is $|\lambda/\mu|=|\lambda|-|\mu|$.
The skew diagram $\lambda/\mu$ is a horizontal (vertical) $r$-strip if it contains at most one box in each column (row) and $|\lambda/\mu|=r$.
The skew diagram $\lambda/\mu$ is a horizontal strip if and only if $
\lambda_1 \ge \mu_1 \ge \lambda_2 \ge \mu_2 \ge \cdots$. Note that $\lambda/\mu$ is a horizontal strip if and only if  $\lambda^t/\mu^t$ is a vertical strip.

Let $G$ be a connected reductive algebraic group over $\C$, with Borel subgroup $B\subset G$ and maximal torus $T\subset B$.
Let $\Phi$ be the root system of $(G,T)$, $\Phi^+$ a choice of positive roots, and
$\rho=\frac{1}{2}\sum_{\alpha\in \Phi^+}\alpha$.
A Borel subgroup of $G$ is a maximal connected solvable subgroup, and a parabolic subgroup is any algebraic subgroup $P$
satisfying $B\subset P \subset G$.
Finite-dimensional irreducible algebraic representations of $G$ are classified by dominant integral weights.
Let $W$ denote the Weyl group of $G$.

\begin{theorem}[Weyl Character Formula]\label{weyl char formula}
	Let $\lambda$ be a dominant integral weight, and let $\Pi_\lambda$ be the corresponding irreducible representation of $G$.
	Then
	\[
	\operatorname{ch}(\Pi_\lambda)\;
	\sum_{w\in W}\operatorname{sgn}(w)\,e^{w(\rho)}
	=
	\sum_{w\in W}\operatorname{sgn}(w)\,e^{w(\lambda+\rho)}.
	\]
\end{theorem}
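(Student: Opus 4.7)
The plan is to follow the classical algebraic proof that reformulates the statement as $\operatorname{ch}(\Pi_\lambda)\cdot A(\rho)=A(\lambda+\rho)$, where for any integral weight $\mu$ we set
\[
A(\mu)=\sum_{w\in W}\operatorname{sgn}(w)\,e^{w\mu}
\]
as an element of the group ring $\mathbb{Z}[P]$ of the weight lattice $P$. The first step is to verify that $A(\rho)$, and more generally $A(\mu)$, is $W$-antiinvariant, while $\operatorname{ch}(\Pi_\lambda)$ is $W$-invariant since weights occur with the same multiplicity along any $W$-orbit in $\Pi_\lambda$. Hence the product $\operatorname{ch}(\Pi_\lambda)\cdot A(\rho)$ is $W$-antiinvariant.

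Next I would invoke the standard fact that every $W$-antiinvariant element of $\mathbb{Z}[P]$ is uniquely a $\mathbb{Z}$-linear combination $\sum_{\mu}c_\mu A(\mu)$, where $\mu$ ranges over strictly dominant weights (i.e.\ $\langle\mu,\alpha^{\vee}\rangle>0$ for every simple root $\alpha$): grouping terms in an antiinvariant sum according to $W$-orbits, each regular orbit contributes a single $A(\mu)$, and weights on walls contribute zero. Applied to our product, this yields an a priori expansion
\[
\operatorname{ch}(\Pi_\lambda)\cdot A(\rho)=\sum_{\mu \text{ strictly dominant}} c_\mu A(\mu)
\]
for finitely many $\mu$, and it remains to show that the only nonzero coefficient is $c_{\lambda+\rho}=1$.

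To pin down which $\mu$ appear, I would use the Casimir element $C$ of $U(\mathfrak{g})$: it acts on $\Pi_\lambda$ by the scalar $\langle\lambda+\rho,\lambda+\rho\rangle-\langle\rho,\rho\rangle$. A direct computation shows that if $\nu$ is any weight of $\Pi_\lambda$ then $\langle\nu+\rho,\nu+\rho\rangle\le\langle\lambda+\rho,\lambda+\rho\rangle$, with equality precisely when $\nu+\rho$ lies in the $W$-orbit of $\lambda+\rho$. Since every exponent $\mu$ appearing in the product $\operatorname{ch}(\Pi_\lambda)\cdot e^\rho$ is of the form $\nu+\rho$ for such a weight $\nu$, and the norm is $W$-invariant, only strictly dominant $\mu$ with $\langle\mu,\mu\rangle=\langle\lambda+\rho,\lambda+\rho\rangle$ can appear in the expansion above; the unique such $\mu$ is $\lambda+\rho$ itself. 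The coefficient $c_{\lambda+\rho}$ is then determined by comparing the coefficient of $e^{\lambda+\rho}$ on both sides: on the left it equals one because $\lambda$ is the unique highest weight of $\Pi_\lambda$ and occurs with multiplicity one, while on the right $e^{\lambda+\rho}$ appears in $A(\lambda+\rho)$ with coefficient one since $\lambda+\rho$ is strictly dominant and hence has trivial $W$-stabilizer.

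The main obstacle is the Casimir step in the last paragraph: proving the norm inequality $\langle\nu+\rho,\nu+\rho\rangle\le\langle\lambda+\rho,\lambda+\rho\rangle$ with the stated equality case requires some care, either via Freudenthal's multiplicity-style argument or via the quadratic identity for the Casimir expanded in a root basis together with an $\mathfrak{sl}_2$-triple analysis along simple roots. Once this bound is in hand, every other part of the argument is formal manipulation in $\mathbb{Z}[P]$, and the identification $\operatorname{ch}(\Pi_\lambda)\cdot A(\rho)=A(\lambda+\rho)$ follows.
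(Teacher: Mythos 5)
The paper states the Weyl Character Formula in its Preliminaries section as a classical fact and offers no proof of its own, so there is nothing in the paper to compare against. Judged on its own terms, your proposal follows a recognizable textbook strategy (antiinvariance plus a positivity argument to isolate the leading term), but there is a genuine gap at the decisive step.

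The gap is in the passage from the norm \emph{inequality} to the norm \emph{equality}. You correctly show that the product $\operatorname{ch}(\Pi_\lambda)\cdot A(\rho)$ is $W$-antiinvariant and hence expands as $\sum_\mu c_\mu A(\mu)$ over strictly dominant $\mu$, and that every exponent occurring in the product is $W$-conjugate to some $\nu+\rho$ with $\nu$ a weight of $\Pi_\lambda$, whence $\langle\mu,\mu\rangle\le\langle\lambda+\rho,\lambda+\rho\rangle$ for every strictly dominant $\mu$ that shows up \emph{before} cancellation. But you then assert that only $\mu$ with $\langle\mu,\mu\rangle=\langle\lambda+\rho,\lambda+\rho\rangle$ can have $c_\mu\neq 0$, and nothing you have written forces the inequality to become an equality. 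For example, in $\mathfrak{sl}_2$ with $\lambda=2$ the product $(e^{2}+e^{0}+e^{-2})(e^{1}-e^{-1})$ involves the exponent $e^{1}$, which is strictly dominant and has strictly smaller norm than $e^{3}$; the coefficient of $e^{1}$ vanishes only after a cancellation between the $\nu=0,\ w=e$ and $\nu=2,\ w=s$ contributions. That cancellation for all $\mu\neq\lambda+\rho$ is precisely the content of the theorem, and your argument does not establish it. Mentioning the Casimir eigenvalue of $\Pi_\lambda$ does not by itself produce a constraint on the $\mu$ in the antiinvariant expansion; to get one, you need to actually propagate the Casimir/central-character information, for instance by writing $[\Pi_\lambda]=\sum_\mu b_\mu[M(\mu)]$ in the Grothendieck group of category $\mathcal{O}$ and invoking Harish-Chandra's linkage principle to conclude that every $\mu$ with $b_\mu\neq 0$ satisfies $\mu+\rho\in W(\lambda+\rho)$, after which antiinvariance fixes the signs. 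Alternatively, one can carry out a direct combinatorial cancellation (as in Bourbaki or Macdonald) or Weyl's original integration argument. As written, the proposal skips the part of the proof that does the real work.
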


 Finite-dimensional irreducible algebraic representations of $\GL(n,\C)$ are parameterized by highest weights
$\lambda_1\ge \lambda_2 \ge \cdots \ge \lambda_n$. In this paper, we consider only the polynomial representations of $\GL(n, \C)$, thus we always will impose the condition that $\lambda_n \geq 0$. Often, we are reduced to the condition $\lambda_n \geq 0$ because of twisting by the determinant character $\operatorname{det}: \GL(n, \C) \to \C^{*}$, but in any case, for our work, we prefer to assume throughout the paper $\lambda_n\geq 0$.
 Similarly, irreducible finite dimensional representations of classical groups $\Sp(2n, \C)$, $\SO(2n+1, \C)$, $\SO(2n, \C)$ are parameterized by their highest weights $\lambda_1 \geq \lambda_2 \geq \cdots \geq \lambda_n$ with $\lambda_i$ integers which are $\geq 0$  $\forall i$ for $\Sp(2n, \C)$ and $\SO(2n+1, \C)$, but for $\SO(2n, \C)$, it may happen that $\lambda_n \leq 0$; for $\SO(2n, \C)$, $\lambda_{n-1} \geq |\lambda_n|$. We will often use the well-known fact about classical groups that any finite-dimensional(algebraic) representation of them is completely reducible.
 
 Let $\varepsilon_1,\ldots,\varepsilon_n$ denote the standard orthonormal basis of the weight lattice.  
 For the four classical families we have:
\begin{equation}\label{rho}
	\rho= \begin{cases}
		\sum_{i=1}^{n} (n-i) \varepsilon_i, & \text{ for } \quad \GL(n, \C) ,  \SO(2n, \C), \\ 
		\sum_{i=1}^{n}	(n-i+1)\varepsilon_i, & \text{ for } \quad \Sp(2n, \C), \\
		\sum_{i=1}^{n}	(n-i+\frac{1}{2})\varepsilon_i, & \text{ for } \quad \SO(2n+1, \C).
	\end{cases} 
\end{equation}

Now, let
$
G = \Sp({2n},\mathbb{C}), \SO({2n+1},\mathbb{C}),  \text{or }  \SO({2n},\mathbb{C}).
$
Maximal parabolic subgroups correspond to removing a single simple root from the Dynkin diagram of $G$.  
Equivalently, they are  precisely the stabilizers of isotropic subspaces of dimension $k$, where $1 \le k \le n$. 
Up to $G$-conjugacy, there is exactly one conjugacy class of such maximal parabolic subgroups for each $k$ with $1 \le k \le n$, except in type $D_n$, where the case $k = n$ splits into two non-conjugate classes.
Each maximal parabolic subgroup admits a Levi decomposition. Levi factor for $k \neq n$ case is given by :
\[
L \;\cong\;
\begin{cases}
	\GL(k,\C)\times \Sp({2(n-k)},\C), & G=\Sp({2n}, \C),\\[4pt]
	\GL(k,\C)\times \SO({2(n-k)+1}, \C), & G=\SO({2n+1}, \C),\\[4pt]
\GL(k,\C)\times \SO({2(n-k)}, \C), & G=\SO({2n}, \C).
\end{cases}
\]
When $k = n$ (Siegel parabolic case), the Levi factor is $\GL(n,\mathbb{C})$ for all three families of groups.
In type $D_n$, for $k = n$, there are two non-conjugate maximal parabolic subgroups, each with Levi factor $\GL(n,\mathbb{C})$.

\section{Pieri Rule for General Linear Groups}\label{sec pieri gen dp}

The aim of this section is to prove the both versions of Pieri's rule for general linear groups. Recall that Pieri's rule are of two kind, one in which for an irreducible highest weight module $\pilam$ of $\GL(V)$ or more generally a classical group $G(V)$ defined using a quadratic or symplectic form on $V$, one calculates $\pilam \otimes \Lambda^i V$, and the other form of the Pieri rule calculates $\pilam \otimes \Sym^i V$. The proof outlined below first proves the Pieri's rule for $\pilam \otimes \Lambda^i V$, and then deduces it for
$\pilam \otimes \Sym^i V$. We have simply elaborated the proof given by Macdonald \cite[(I.5.17)]{IGM}. 

\begin{theorem}\label{thm ext dp}
	Let $V$ be an $n$-dimensional vector space over $\C$, identified to $\C^n$. Then for a finite dimensional highest weight module $\pilam$ of $\GL(V)$,
	$$\pilam \otimes \Lambda^i V = \sum_{\mu} \pimu,$$
	a multiplicity one sum of irreducible highest weight modules, with $\mu=(\mu_1 \geq \cdots \geq \mu_n \geq 0)$ which is obtained by adding $i$ boxes to the Young diagram of $\lambda=(\lambda_1 \geq \cdots \geq \lambda_n \geq 0)$ such that no two boxes are added in the same row.
\end{theorem}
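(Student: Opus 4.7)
The plan is to follow Macdonald's bialternant approach, using the Weyl character formula to translate the tensor product identity into an identity of characters, and then to manipulate alternants. Since $\Pi_\lambda$ is irreducible with character the Schur polynomial $s_\lambda(x_1,\ldots,x_n)$, and $\Lambda^i V$ has character $e_i(x_1,\ldots,x_n)$, the theorem is equivalent to the symmetric function identity
\[
s_\lambda \cdot e_i \;=\; \sum_{\mu} s_\mu,
\]
summed over partitions $\mu$ such that $\mu/\lambda$ is a vertical $i$-strip. Using Theorem~\ref{weyl char formula} with $\delta=(n-1,n-2,\ldots,0)$, write $s_\lambda = a_{\lambda+\delta}/a_\delta$, where $a_\alpha(x) = \sum_{w\in S_n} \operatorname{sgn}(w)\, x^{w(\alpha)}$. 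It then suffices to establish
\[
a_{\lambda+\delta} \cdot e_i \;=\; \sum_{\mu} a_{\mu+\delta},
\]
the sum again being over vertical strips.

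The first step is to expand the left-hand side as $\sum_{|S|=i}\sum_w \operatorname{sgn}(w)\, x^{w(\lambda+\delta)+\chi_S}$, where $\chi_S\in\{0,1\}^n$ is the indicator vector of $S\subseteq\{1,\ldots,n\}$. The key reindexing substitutes $T = w^{-1}(S)$, so that $\chi_S = w(\chi_T)$ and the exponent becomes $w(\lambda+\delta+\chi_T)$. Swapping the order of summation then yields
\[
a_{\lambda+\delta}\cdot e_i \;=\; \sum_{|T|=i} a_{\lambda+\delta+\chi_T}.
\]
This is the crucial identity; it reduces Pieri to a combinatorial analysis of when $a_{\lambda+\delta+\chi_T}$ vanishes and when it equals some $a_{\mu+\delta}$.

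Next I would carry out the case analysis on the exponent $\lambda+\delta+\chi_T$. For adjacent indices $k,k+1$, the difference between the $k$-th and $(k{+}1)$-th entries equals $(\lambda_k-\lambda_{k+1})+1+(\chi_T(k)-\chi_T(k+1))$. Because $\lambda$ is a partition and $\chi_T$ takes values in $\{0,1\}$, this difference is either nonnegative (in which case the whole sequence is strictly decreasing and $\lambda+\delta+\chi_T = \mu+\delta$ with $\mu = \lambda+\chi_T$ a partition) or equals $0$ (in which case $a_{\lambda+\delta+\chi_T}=0$); it can never be negative. The equal case occurs precisely when $\lambda_k=\lambda_{k+1}$, $\chi_T(k)=0$, and $\chi_T(k+1)=1$. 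Ruling this out for every $k$ is exactly the condition that $\mu = \lambda+\chi_T$ is a partition with $\mu/\lambda$ a vertical $i$-strip, and in that case $a_{\lambda+\delta+\chi_T}=a_{\mu+\delta}$ with coefficient $+1$. Non-adjacent collisions can be excluded by the same monotonicity argument applied telescopically.

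The main obstacle is the bookkeeping in the reindexing step and the careful case analysis verifying that (i) surviving $T$ contribute with sign $+1$, (ii) each vertical-strip $\mu$ is produced by exactly one $T$ (namely $T = \{k : \mu_k > \lambda_k\}$), so multiplicities are one, and (iii) no cancellation between different $T$ is possible because each nonzero $a_{\lambda+\delta+\chi_T}$ is already in the form $a_{\mu+\delta}$ without permutation. Dividing through by $a_\delta$ yields $\operatorname{ch}(\Pi_\lambda)\cdot\operatorname{ch}(\Lambda^i V) = \sum_\mu \operatorname{ch}(\Pi_\mu)$, and complete reducibility of $\GL(n,\C)$-representations finishes the proof.
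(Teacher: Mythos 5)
Your argument is the same Weyl-character/alternant computation that the paper carries out (following Macdonald), with the double sum over the Weyl group replaced by the equivalent double sum over $S_n$ and $i$-element subsets $T\subseteq\{1,\ldots,n\}$; the reindexing $T=w^{-1}(S)$ is exactly the paper's substitution producing exponents $w(\lambda+\rho+w'(\omega_i))$, and your case analysis on when $\lambda+\delta+\chi_T$ has a repeated entry matches the paper's observations (i)--(ii) about strict dominance. The proof is correct and takes essentially the same route.
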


\begin{proof}
	The proof of this theorem is a direct consequence of the Weyl Character formula according to which $\Theta_{\lambda}(t)$, the character of $\pilam$ at the diagonal matrix $t=(t_1, \ldots, t_n)$ is given by
	\begin{align*}
		\Theta_{\lambda}(t)=\frac{\sum_{w \in W} (-1)^w t^{w(\lambda + \rho)}}{\sum_{w \in W} (-1)^w t^{w \rho}}.
	\end{align*}
	Let $$\Delta(t)=\sum_{w} (-1)^w t^{w \rho}.$$
	All the weights of the irreducible representation $\Lambda^iV$ are of the form 
	$$e_{j_1} + e_{j_2} + \cdots + e_{j_i}, \quad \text{ with } 1 \leq j_1 < j_2 < \cdots j_i \leq n.   $$ 
	Notice that all the weights of the representation $\Lambda^i V$ of $\GL(V)$ are in the $W$-orbit of the highest weight $\omega_i=e_1 + \cdots + e_i$, with $\omega_i(t)=t_1\cdots t_i$.
	Therefore,
	\begin{align*}
		\Delta(t)	\Theta_{\lambda}(t) \Theta(\Lambda^i V)(t)=\frac{1}{|W_i|} \sum_{w \in W} (-1)^w t^{w(\lambda + \rho)} \sum_{w \in W}  t^{w(\omega_i)},
	\end{align*}
	where $W_i=$ stabilizer of $\omega_i$ in $W$.
	
	Therefore,
	\begin{align*}
		\Delta(t)	\Theta_{\lambda}(t) \Theta(\Lambda^i V)(t)=\frac{1}{|W_i|} \sum_{w, w^{'} \in W } (-1)^w t^{w(\lambda + \rho + w^{'}(\omega_i))}.
	\end{align*}
	
	Now, note the following two observations:
	\begin{enumerate}
		\item[(i)] The weight $\lambda + \rho$ is strictly dominant, i.e., $\langle \lambda + \rho, \alpha^{\vee}_a \rangle >0$ for all coroots $\alpha^{\vee}_a$, but $\langle \lambda + \rho + w(\omega_i), \alpha^{\vee}_a \rangle \geq 0$ for all coroots $\alpha^{\vee}_a$.
		In fact, if $\lambda + \rho + w(\omega_i)$ is  dominant but not strictly dominant, then it is stabilized by a simple reflection in the Weyl group $W$ and hence
		$$\sum_{w \in W } (-1)^w t^{w(\lambda + \rho + w^{'}(\omega_i))}=0.$$
		
		\item[(ii)] The weight $\lambda + \rho + w^{'}(\omega_i)$ which is dominant by (i), is strictly dominant $\iff$ $\lambda + \rho + w^{'}(\omega_i)=\mu_{w^{'}} + \rho$ such that $\mu_{w^{'}}$ is dominant and is obtained by adding $i$ boxes in the Young diagram of $\lambda$ such that no two boxes are added in the same row.
	\end{enumerate}
	
	By observation (i) and (ii) above,
	\begin{align*}
		\Delta(t)	\Theta_{\lambda}(t) \Theta(\Lambda^i V)(t)&=\frac{1}{|W_i|} \sum_{w, w^{'} \in W } (-1)^w t^{w(\lambda + \rho + w^{'}(\omega_i))},\\
		&=\frac{1}{|W_i|} \sum_{w, w^{'} \in W } (-1)^w t^{w(\mu_{w^{'}} + \rho)},
	\end{align*}
	where in the second sum $w^{'}$  varies over those elements of the Weyl group $W$ such that $\lambda + \rho + w^{'}(\omega_i)=\mu_{w^{'}} + \rho$ is strictly dominant. Observe that the set of such $w^{'} \in W$ is a union of coset spaces $W/W_i$ in which each coset contributes to the same $\mu_{w^{'}}$.
	
	Thus,
	\begin{align*}
		\Delta(t)	\Theta_{\lambda}(t) \Theta(\Lambda^i V)(t)&=\frac{1}{|W_i|} \sum_{w, w^{'} \in W } (-1)^w t^{w(\lambda + \rho + w^{'}(\omega_i))},\\
		&= \sum_{w \in W,  w^{'} \in W/W_i } (-1)^w t^{w(\mu_{w^{'}} + \rho)}, \\
		&=\sum_{w^{'} \in W/W_i } \Theta_{\mu_{w^{'}}} \Delta(t) .
	\end{align*} 
	
	Therefore,
	\begin{align*}
		\Theta_{\lambda}(t) \Theta(\Lambda^i V)(t)&=\sum_{w^{'} \in W/W_i } \Theta_{\mu_{w^{'}}},
	\end{align*}
	where $w^{'} \in W/W_i$ is such that $\lambda + \rho + w^{'}(\omega_i)=\mu_{w^{'}} + \rho$ is strictly dominant proving the theorem by the observation (ii) above.
\end{proof}

Using Theorem \ref{thm ext dp} for $\GL(4, \C)$, we calculate:
$\Pi_{(3,2,1,0)} \otimes \Lambda^2(\C^4) = \Pi_{(3,2,2,1)} + \Pi_{(3,3,1,1)} + \Pi_{(3,3,2,0)} + \Pi_{(4,2,1,1)}+ \Pi_{(4,2,2,0)} + \Pi_{(4,3,1,0)}$.

Using Theorem \ref{thm pieri gen dp} for $\GL(4, \C)$, we calculate:
$\Pi_{(3,2,1,0)} \otimes \Sym^2(\C^4) = \Pi_{(3,2,2,1)} + \Pi_{(3,3,1,1)} + \Pi_{(3,3,2,0)} + \Pi_{(4,2,1,1)} + \Pi_{(4,2,2,0)} + \Pi_{(4,3,1,0)} + \Pi_{(5,2,1,0)}$.

\begin{theorem}[Pieri Rule]\label{thm pieri gen dp}
	Let $V$ be an $n$-dimensional vector space over $\C$, identified to $\C^n$. Then for a finite dimensional highest weight module $\pilam$ of $\GL(V)$,
	$$\pilam \otimes \Sym^i V = \sum_{\mu} \pimu,$$
	a multiplicity one sum of irreducible highest weight modules $\pimu$, with $\mu=(\mu_1 \geq \cdots \geq \mu_n \geq 0)$ which is obtained by adding $i$ boxes to the Young diagram of $\lambda=(\lambda_1 \geq \cdots \geq \lambda_n \geq 0)$ such that no two boxes are added in the same column.
\end{theorem}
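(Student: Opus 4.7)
The plan is to deduce the symmetric Pieri rule from the exterior Pieri rule of Theorem \ref{thm ext dp} via the fundamental involution $\omega$ on the ring $\Lambda$ of symmetric functions. The first step is to reinterpret Theorem \ref{thm ext dp} as the identity
$$s_\lambda \cdot e_i \;=\; \sum_{\mu:\, \mu/\lambda \text{ vertical } i\text{-strip}} s_\mu,$$
of symmetric polynomials in $n$ variables, using the facts that $\operatorname{ch}(\Pi_\lambda) = s_\lambda(x_1, \ldots, x_n)$ and $\operatorname{ch}(\Lambda^i V) = e_i(x_1, \ldots, x_n)$, and then to lift it to an identity in $\Lambda$ (the inverse limit of symmetric polynomial rings) via the standard stability of Schur polynomials under $x_{n+1} = 0$.

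Next, I would apply the $\Z$-algebra involution $\omega : \Lambda \to \Lambda$ defined by $\omega(e_i) = h_i$, which is well known to satisfy $\omega(s_\lambda) = s_{\lambda^t}$. Applied to the identity above, this yields
$$s_{\lambda^t} \cdot h_i \;=\; \sum_{\mu:\, \mu/\lambda \text{ vertical } i\text{-strip}} s_{\mu^t}.$$
Since $\mu/\lambda$ is a vertical $i$-strip exactly when $\mu^t/\lambda^t$ is a horizontal $i$-strip, and since $\omega^2 = \mathrm{id}$, renaming $\lambda^t \rightsquigarrow \lambda$ and $\mu^t \rightsquigarrow \mu$ produces the Pieri identity for $h_i$:
$$s_\lambda \cdot h_i \;=\; \sum_{\mu:\, \mu/\lambda \text{ horizontal } i\text{-strip}} s_\mu.$$

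To conclude, I would specialize back to $n$ variables by setting $x_{n+1} = x_{n+2} = \cdots = 0$. Any $\mu$ with $\ell(\mu) > n$ then has $s_\mu(x_1, \ldots, x_n) = 0$, so only those $\mu$ with $\ell(\mu) \leq n$ survive; since $\operatorname{ch}(\Sym^i V) = h_i$ on $\GL(V) = \GL(n, \C)$, this yields exactly the claimed decomposition. The main (and essentially only) hurdle is the passage through $\Lambda$: when $\ell(\lambda) = n$, the transpose $\lambda^t$ can have up to $\lambda_1$ parts, so the $\omega$-step cannot be carried out purely within $n$-variable characters, and one must invoke the stability of the Schur basis to move freely between the $\GL(n,\C)$-character $\operatorname{ch}(\Pi_\lambda)$ and the abstract symmetric function $s_\lambda \in \Lambda$. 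A direct WCF-style alternative, mimicking the proof of Theorem \ref{thm ext dp} with the decomposition $\operatorname{ch}(\Sym^i V) = \sum_{\nu \vdash i,\, \ell(\nu) \le n} m_\nu$, is also available, but the bijection between strictly-dominant contributions and horizontal strips becomes combinatorially more delicate because $\nu$ now ranges over all partitions of $i$ rather than being pinned to a single minuscule weight; for that reason I expect the $\omega$-route to be the cleanest exposition.
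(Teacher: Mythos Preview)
Your proposal is correct and follows essentially the same route as the paper: lift the exterior Pieri identity $s_\lambda e_i = \sum s_\mu$ from $n$ variables to the ring $\Lambda$ of symmetric functions, apply the involution $\omega$ (using $\omega(e_i)=h_i$ and $\omega(s_\lambda)=s_{\lambda^t}$) to obtain the $h_i$-Pieri rule, and then specialize back via the homomorphism $\Lambda \to \Z[t_1,\dots,t_n]$, noting that $s_\mu$ with $\ell(\mu)>n$ die under specialization. The paper likewise singles out the passage through $\Lambda$ as the point requiring care, for the same reason you identify.
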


\begin{remark}\label{remark eq pieri}
	An equivalent way to assert Pieri's Rule (Theorem~\ref{thm pieri gen dp}) is: 
	$\pilam \otimes \Sym^i V = \sum_{\mu} \pimu,$
	the sum over all $\mu$ with
	\begin{enumerate}
		\item $\mu_1 \geq \lambda_1 \geq \mu_2 \geq \lambda_2 \geq \cdots \geq \mu_n \geq \lambda_n \geq 0$,
		\item $\sum_{j=1}^n \mu_j=\sum_{j=1}^n \lambda_j+k$.
	\end{enumerate}
\end{remark}

\begin{proof}(of Theorem~\ref{thm pieri gen dp})
	The proof of this theorem is a direct consequence of the previous theorem about $\pilam \otimes \Lambda^i V = \sum_{\mu} \pimu$ by standard methods that we elaborate now.
	
	Let $\Lambda$ be the ring of symmetric functions in infinitely many variables $t_1, t_2, \ldots, t_n, \ldots$ over $\Z$. The ring $\Lambda$ is a polynomial algebra $\Z[e_i]$ on the space of elementary symmetric functions
	$$ e_i=\sum_{n_1< n_2 < \cdots < n_i} t_{n_1}t_{n_2} \cdots t_{n_i}.$$
	
	Let $h_i$ denotes the complete symmetric function 
	$$ h_i=\sum_{n_1\leq  n_2 \leq \cdots \leq  n_i} t_{n_1}t_{n_2} \cdots t_{n_i}.$$
	
	Then, we  have the formal identities:
	$$\prod_{j=1}^{\infty} \frac{1}{(1-t_jX)}=\sum_{i=0}^{\infty} h_i X^i,$$
	
	$$\prod_{j=1}^{\infty} {(1-t_jX)}=\sum_{i=0}^{\infty} (-1)^i e_i X^i,$$
	therefore,
	\begin{equation}\label{eh prod 1}
		\sum_{i=0}^{\infty} h_i X^i \sum_{i=0}^{\infty} (-1)^i e_i X^i =1.
	\end{equation}
	
	As $\Lambda=\Z[e_1, e_2, \ldots]$, a polynomial ring in infinitely many variables, one can define a homomorphism of rings $\omega: \Lambda \to \Lambda$ by $\omega (e_i)=h_i$. Applying  $\omega$ to the identity (\ref{eh prod 1}), we deduce that $\omega(h_i)=e_i$, thus $\omega^2=\text{ Id}$, proving also that 
	$$\Lambda=\Z[e_1, \ldots, e_n, \ldots]=\Z[h_1, \ldots, h_n, \ldots].$$
	
	Recall that for a highest weight $\lambda=(\lambda_1 \geq \cdots \geq \lambda_n \geq 0)$, we have a compatible family of symmetric functions $s_{\lambda}(m) \in \Z[t_1, \ldots, t_m]$ for all $m \geq n$, defining an element $s_{\lambda} \in \Lambda$ such that the image of $s_{\lambda}$ under the natural homomorphism $\Lambda \rightarrow \Z[t_1, \ldots, t_m]$ is  $s_{\lambda}(m)$ for all $m \geq n$.
	
	Now a basic fact \cite[I.5.6]{IGM}
	 about the involution $\omega$ on $\Lambda$ is that 
	$$ \omega(s_{\lambda})=s_{\lambda^t},$$ 
	where $s_{\lambda^t}$ is the Schur function in infinitely many variables associated to the Young diagram which is the transpose of the Young diagram of $\lambda$,
	generalizing
	$$\omega(e_i)=h_i. $$
	
	Now $\omega$ being a ring homomorphism, Pieri's theorem about $s_{\lambda} \cdot e_i$:
	$$s_{\lambda} \cdot e_i= \sum s_{\mu}, $$ 
	after applying $\omega$ looks like:
	$$w(s_{\lambda}) \cdot w(e_i)= \sum w(s_{\mu}), $$ 
	i.e., 
	$$s_{\lambda^t} \cdot h_i= \sum s_{\mu^t}, $$ 
	which is the desired version of the Pieri's theorem in our context. Since we are proving the Pieri's rule  for $\GL(n, \C)$ using the ring $\Lambda$ in infinitely many variables, some explanation is called for.
	
	For $l(\lambda) \leq n$, i.e. $\lambda=(\lambda_1 \geq \cdots \geq \lambda_n \geq 0)$, let $s_{\lambda}(n) \in \Z[t_1, \ldots, t_n]$, and $s_{\lambda} \in \Lambda$. 
	Note that the identity,
	\begin{equation}\label{eq slein prod}
		s_{\lambda}(n) \cdot e_i(n)= \sum s_{\mu}(n),
	\end{equation}
	the first form of Pieri's rule as in Theorem~\ref{thm ext dp}, translates into:
	$$s_{\lambda} \cdot e_i= \sum s_{\mu}. $$ 
	
	More precisely,
	if 
	$$s_{\lambda}(n) \cdot s_{\mu}(n)=\sum L_{\lambda, \mu}^{\nu}(n) s_{\nu}(n),$$
	and 
	$$s_{\lambda} \cdot s_{\mu}=\sum L_{\lambda, \mu}^{\nu} s_{\nu},$$
	then $$L_{\lambda, \mu}^{\nu}=L_{\lambda, \mu}^{\nu}(n) \text{ if } l(\lambda) \leq n, l(\mu) \leq n, l(\nu) \leq n,  $$
	as follows from the specialization homomorphism $\phi_n: \Lambda \rightarrow \Z[t_1, \ldots, t_n]$ which has the property that
	$$\phi_n(s_{\lambda})=\begin{cases}
		s_{\lambda}(n) & \text{if } l(\lambda) \leq n,\\
		0 & \text{otherwise.}
	\end{cases} $$
	
	Thus we have,
	$$s_{\lambda} \cdot e_i= \sum s_{\mu}, $$ 
	and therefore,
	\begin{equation}\label{eq sh infty}
		s_{\lambda^t} \cdot h_i= \sum_{\mu^t} s_{\mu^t}.
	\end{equation}

	Once again, the specialization homomorphism allows us to deduce 
	\begin{equation}\label{eq sh finite}
		s_{\lambda^t}(n) \cdot h_i(n)= \sum_{\mu^t} s_{\mu^t}(n),
	\end{equation}
	where to be sure, the sum on the right hand side of the equalities in (\ref{eq sh infty}) and (\ref{eq sh finite}) need not be the same, more precisely the sum in the RHS of (\ref{eq sh finite}) consists of those terms $s_{\mu^t}$ in (\ref{eq sh infty}) of length $ l(\mu^t)\leq n$.
\end{proof}

In the literature, various proofs of Pieri's rules can be found. For example, see \cite[Theorem 6.1]{APschur} for a combinatorial proof using labeled Abaci.

The proof of Theorem~\ref{thm ext dp} when carried out for general reductive groups gives the following more general theorem, versions of which are available in the literature, for example see \cite[Corollary 3.4]{Shrawantensor}.

\begin{theorem}\label{thm pieri dp}
	Let $(G, B, T)$ be a triple of reductive group $G$, a Borel subgroup $B$, and a maximal torus $T \subseteq B$. Let $\rho$ be half the sum of the positive roots of $T$ in $B$, and $\lambda : T \to \C^{*}$, a dominant weight. Let $U$ be any irreducible representation of $G$ such that $\lambda + \rho + \mu$ is a dominant weight for all $\mu$,  characters of $T$ appearing in $U$. Then 
	\begin{equation*}
		\pilam \otimes U = \sum_{\mu} m(\mu) \Pi_{\lambda + \mu},
	\end{equation*}
	where $\mu : T \to \C^{*}$ runs over all characters of $T$ which appear in $U$ with multiplicity $m(\mu)$,  with the property that $\lambda + \mu$ is dominant. (The assumption in the theorem is only that $\lambda + \rho + \mu$ is  dominant, so for some $\mu : T \to \C^{*}$ appearing in $U$, $\lambda + \mu$ may not be dominant.)
\end{theorem}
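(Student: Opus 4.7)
The plan is to mimic the proof of Theorem~\ref{thm ext dp} almost verbatim, observing that the only feature of $\Lambda^{i}V$ actually used there was the $W$-invariance of its character; every step carries over to an arbitrary irreducible $U$ once we keep track of multiplicities $m(\mu)$. The hypothesis that $\lambda + \rho + \mu$ is dominant for every weight $\mu$ of $U$ is precisely what prevents weights from ``crossing'' Weyl chamber walls, making the character manipulation clean.

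Concretely, write $\Delta = \sum_{w \in W}(-1)^w e^{w\rho}$ for the Weyl denominator and expand $\operatorname{ch}(U) = \sum_{\mu} m(\mu) e^{\mu}$. Multiplying the Weyl Character Formula (Theorem~\ref{weyl char formula}) for $\Pi_\lambda$ by $\operatorname{ch}(U)$ gives
\[
\Delta \cdot \operatorname{ch}(\pilam) \cdot \operatorname{ch}(U) \;=\; \sum_{w \in W}\sum_{\mu} (-1)^{w}\, m(\mu)\, e^{w(\lambda+\rho)+\mu}.
\]
Using the $W$-invariance $m(\mu) = m(w^{-1}\mu)$ and performing the change of variables $\mu \mapsto w\mu$ inside the double sum regroups this as
\[
\Delta \cdot \operatorname{ch}(\pilam) \cdot \operatorname{ch}(U) \;=\; \sum_{\mu} m(\mu) \sum_{w \in W} (-1)^w e^{w(\lambda+\rho+\mu)}.
\]

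Now I would invoke the standard dichotomy, exactly as in observations (i)--(ii) in the proof of Theorem~\ref{thm ext dp}. Since $\langle \rho, \alpha_i^{\vee}\rangle = 1$ for every simple coroot, the dominant weight $\lambda+\rho+\mu$ is strictly dominant if and only if $\lambda+\mu$ is dominant. If $\lambda+\rho+\mu$ fails to be strictly dominant, then it is fixed by a simple reflection and the inner alternating sum vanishes. Otherwise, writing $\lambda+\rho+\mu = (\lambda+\mu)+\rho$ with $\lambda+\mu$ dominant, the Weyl Character Formula identifies $\sum_{w}(-1)^w e^{w(\lambda+\rho+\mu)}$ with $\Delta \cdot \operatorname{ch}(\Pi_{\lambda+\mu})$.

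Substituting and cancelling $\Delta$ yields
\[
\operatorname{ch}(\pilam) \cdot \operatorname{ch}(U) \;=\; \sum_{\mu\,:\,\lambda+\mu \text{ dominant}} m(\mu)\, \operatorname{ch}(\Pi_{\lambda+\mu}),
\]
which is the desired decomposition. There is no real obstacle here: the only point requiring care is the legitimacy of the re-indexing $\mu \mapsto w\mu$ under the double sum, which uses $W$-invariance of $\operatorname{ch}(U)$, and the verification that the two regimes (strictly dominant versus on a wall) exhaust all possibilities permitted by the hypothesis $\lambda+\rho+\mu \in \overline{C^{+}}$. Both are formal once the setup is in place.
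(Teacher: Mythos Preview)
Your proposal is correct and follows precisely the approach the paper intends: the paper does not give a separate proof of Theorem~\ref{thm pieri dp} but simply states that ``the proof of Theorem~\ref{thm ext dp} when carried out for general reductive groups gives the following more general theorem,'' and your argument is exactly that generalization. Your use of the $W$-invariance $m(\mu)=m(w\mu)$ together with the substitution $\mu\mapsto w\mu$ is the natural replacement for the single-$W$-orbit/stabilizer bookkeeping in the proof of Theorem~\ref{thm ext dp}, and your dichotomy between strictly dominant and wall cases matches observations (i)--(ii) there verbatim.
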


\begin{corollary}
	If $U$ is a minuscule representation of $G$, then the hypothesis of Theorem~\ref{thm pieri dp} is automatically satisfied, hence
	$$
	\Pi_\lambda \otimes U \;=\; \sum_{\mu} \Pi_{\lambda+\mu},
	$$
	where $\mu$ runs over $W$-orbit of the highest weight of $U$ and each weight occurs with multiplicity $1$, restricted to those $\mu$ such that $\lambda+\mu$ is dominant.
\end{corollary}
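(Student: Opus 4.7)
The plan is to show that the hypothesis of Theorem~\ref{thm pieri dp} is automatically satisfied when $U$ is minuscule, after which the conclusion is immediate from that theorem. The entire content of the corollary reduces to a short coroot inequality.

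First I would recall the defining property of a minuscule representation: all weights of $U$ lie in a single $W$-orbit, each with multiplicity one, and, equivalently, for the highest weight $\omega$ one has $\langle \omega, \alpha^{\vee}\rangle \in \{-1,0,1\}$ for every root $\alpha$. Since the pairing transforms equivariantly under $W$ and the set of roots is $W$-stable, the bound $\langle \mu, \alpha^{\vee}\rangle \in \{-1,0,1\}$ then holds for \emph{every} weight $\mu$ of $U$ and every root $\alpha$. This is the only structural input I need.

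Next I would verify dominance of $\lambda+\rho+\mu$ for each weight $\mu$ of $U$. It suffices to check nonnegativity of $\langle \lambda+\rho+\mu, \alpha_i^{\vee}\rangle$ on simple coroots $\alpha_i^{\vee}$. Here $\langle \rho, \alpha_i^{\vee}\rangle = 1$ is the standard identity, $\langle \lambda, \alpha_i^{\vee}\rangle \geq 0$ since $\lambda$ is dominant, and $\langle \mu, \alpha_i^{\vee}\rangle \geq -1$ by the minuscule bound obtained in the previous step. Summing gives $\langle \lambda+\rho+\mu,\alpha_i^{\vee}\rangle \geq 0$, so $\lambda+\rho+\mu$ is dominant. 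Thus the hypothesis of Theorem~\ref{thm pieri dp} holds for every weight $\mu$ appearing in $U$.

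Finally I would invoke Theorem~\ref{thm pieri dp} directly. It yields
\[
\Pi_\lambda \otimes U \;=\; \sum_{\mu} m(\mu)\, \Pi_{\lambda+\mu},
\]
where $\mu$ runs over weights of $U$ with $\lambda+\mu$ dominant, and $m(\mu)$ is the weight multiplicity. Because $U$ is minuscule, $m(\mu)=1$ for every weight, and the set of weights is exactly the $W$-orbit of the highest weight. This produces the stated formula. There is essentially no obstacle: the corollary is a clean application of Theorem~\ref{thm pieri dp} once the minuscule bound on coroot pairings is invoked; no separate argument is required to match the multiplicities or to describe the weights, since both are built into the definition of \emph{minuscule}.
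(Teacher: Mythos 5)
Your proof is correct and follows the paper's argument essentially verbatim: recall the minuscule bound $\langle\mu,\alpha^{\vee}\rangle\in\{-1,0,1\}$, combine it with $\langle\rho,\alpha^{\vee}\rangle=1$ and dominance of $\lambda$ to verify the hypothesis of Theorem~\ref{thm pieri dp}, and then read off the conclusion. The only cosmetic difference is that you spell out why the bound on coroot pairings propagates from the highest weight to all weights via $W$-equivariance, which the paper leaves implicit.
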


\begin{proof}
	If $U$ is minuscule, its set of $T$-weights is a single $W$-orbit and every weight has multiplicity $1$. Moreover, for any simple coroot $\alpha^\vee$ and any weight $\mu$ of $U$, one has
	$$
	\langle \mu, \alpha^\vee\rangle \in \{ -1,0,1\}.
	$$
	Since $\lambda$ is dominant, $\langle \lambda,\alpha^\vee\rangle\ge 0$ for all simple $\alpha^\vee$, and therefore
	$$
	\langle \lambda+\rho+\mu,\alpha^\vee\rangle
	= \langle \lambda,\alpha^\vee\rangle + \langle \rho,\alpha^\vee\rangle + \langle \mu,\alpha^\vee\rangle
	\ge 0 + 1 + (-1) \;=\; 0.
	$$ 
\end{proof}

\begin{remark}
	 In Theorem~\ref{thm pieri dp}, the condition that $\lambda + \rho + \mu$ is a dominant weight for all $\mu$ is known to be sufficient. Based on Sagemath computations and examples, it appears that this condition is also necessary for the tensor product multiplicity to coincide exactly with the corresponding weight multiplicity. Although we have not found any counterexamples, a formal proof of this necessity remains open.
\end{remark}
An example of Theorem~\ref{thm pieri dp} for symplectic group as follows.
	\begin{example}
	Let $\lambda=\left\{ \lambda_1 > \lambda_2 > \cdots > \lambda_n > 0 \right \}$  be a regular sequence of integers and $\pilam$ be the corresponding irreducible representations of $\Sp{(2n, \C)}$.
	Consider the tensor product 
	$\pilam \otimes \Sym^2{(V)}$.
	 $\lambda + \rho + \mu$ is dominant for all weight $\mu$  in $\Sym^2{(V)}$, satisfies the hypothesis of Theorem~\ref{thm pieri dp}, hence
	$ \Pi_{\lambda + \mu} \subseteq \pilam \otimes \Sym^2{(V)}$,
	furthermore multiplicity of $\Pi_{\lambda + \mu}$ in the above tensor product is the multiplicity of the weight $\mu$ in $\Sym^2{(V)}$.
\end{example}

\section{Pieri for Strictly Dominant weights for Classical Groups}
The following theorem
is the Pieri Rule about the decomposition of $\pilam \otimes \Lambda^i V $ as a sum of irreducible representations of $G(V)$ for strictly dominant weights $\lambda$.

\begin{theorem}\label{thm pieri dp2}
	Let $V$ be a finite dimensional vector space over $\C$ with a non-degenerate quadratic or symplectic form and $G(V)$ the corresponding isometry group with determinant $=1$. Let $W$ be an $n$-dimensional maximal isotropic subspace of $V$ such that for another maximal isotropic subspace $W^* \subseteq V$, 
	$$V=\begin{cases}
		W \oplus W^* & \text{ if dim } V \text{ is even}, \\
		W \oplus W^* \oplus \C  & \text{ if dim } V \text{ is odd}.
	\end{cases} $$
	Let $\lambda=(\lambda_1 \geq \cdots \geq \lambda_n )$ be a dominant weight for $G(V)$. Assume that $\lambda$ is regular, i.e., $\lambda=(\lambda_1 > \cdots > \lambda_n >0 )$. Then the hypothesis of Theorem~\ref{thm pieri dp} is satisfied for  $\pilam \otimes \Lambda^i V $, and as a consequence we have the following conclusions.
	\begin{enumerate}
		\item [(i)] $G(V)=\Sp(V)$, i.e., $V=W \oplus W^{*}$ with a symplectic form. Then 
		\begin{equation*}
			\pilam \otimes \Lambda^i V = \sum_{\mu} m(\mu) \pimu,
		\end{equation*}
		where $\mu=(\mu_1 \geq \cdots \geq \mu_n \geq 0)$ such that there exists $\xi \supseteq \lambda$, $\xi \supseteq \mu$ (where containment $\xi \supseteq \lambda$, $\xi \supseteq \mu$  means just containment of Young diagrams), such that $\xi$ is obtained from $\lambda$ by adding at most one box from each row, and similarly $\mu$ is obtained by removing at most one box to each row of $\xi$ such that 
		$$ (|\xi|-|\lambda|) + (|\xi|-|\mu|)=i.$$
		The multiplicity $m(\mu)$ is the cardinality of $\xi$ such that $ (|\xi|-|\lambda|) + (|\xi|-|\mu|)=i$. 
		\item [(ii)] $G(V)=\SO(V)$ with $V=W \oplus W^{*}$. In this case, the recipe about  $\pilam \otimes \Lambda^i V$ is exactly as for $\Sp(V)$.
		\item [(iii)] $G(V)=\SO(V)= \SO(W \oplus W^{*} \oplus \C)$. In this case, the recipe about $\pilam \otimes \Lambda^i V$ is exactly as for the previous two cases except that instead of
		$$ (|\xi|-|\lambda|) + (|\xi|-|\mu|)=i,$$
		we have
		$$ (|\xi|-|\lambda|) + (|\xi|-|\mu|)=i \text{ or } i-1.$$
	\end{enumerate} 
\end{theorem}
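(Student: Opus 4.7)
My strategy is to deduce Theorem~\ref{thm pieri dp2} from Theorem~\ref{thm pieri dp} in two steps: first verify the hypothesis of Theorem~\ref{thm pieri dp} for $U=\Lambda^i V$, and then translate the resulting weight-theoretic decomposition into the combinatorial $\xi$-parametrization. For the first step, in the symplectic/orthogonal basis the defining representation $V$ has weights $\pm e_1,\ldots,\pm e_n$ (with an additional zero weight in case (iii)), so every weight $\mu'$ of $\Lambda^i V$ has coordinates $\mu'_j\in\{-1,0,1\}$. Using (\ref{rho}), $\rho_j-\rho_{j+1}=1$ in all three families, and regularity of $\lambda$ gives $(\lambda+\rho)_j-(\lambda+\rho)_{j+1}\geq 2$; since $\mu'_j-\mu'_{j+1}\geq -2$, the descending inequalities $(\lambda+\rho+\mu')_j\geq(\lambda+\rho+\mu')_{j+1}$ hold. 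The tail/positivity conditions are also immediate: $\lambda_n\geq 1$ and $\mu'_n\geq -1$, combined with $\rho_n$ being $1$, $1/2$, and $0$ respectively, yield $(\lambda+\rho+\mu')_n\geq 0$ for $\Sp(2n)$ and $\SO(2n+1)$, and for $\SO(2n)$ the same kind of estimate applied to the last two coordinates gives both $(\lambda+\rho+\mu')_{n-1}-(\lambda+\rho+\mu')_n\geq 0$ and $(\lambda+\rho+\mu')_{n-1}+(\lambda+\rho+\mu')_n\geq 0$.

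Applying Theorem~\ref{thm pieri dp} now gives
\[
\pilam\otimes\Lambda^i V \;=\; \sum_{\mu'} m(\mu')\,\Pi_{\lambda+\mu'},
\]
the sum over weights $\mu'$ of $\Lambda^i V$ with $\lambda+\mu'$ dominant, and $m(\mu')$ the weight multiplicity in $\Lambda^i V$. To compute $m(\mu')$, pick weight vectors $e_j^+,e_j^-$ of weights $\pm e_j$ (and a zero-weight vector in case (iii)); a weight basis of $\Lambda^i V$ is indexed by $i$-element subsets of this basis. Writing $a_j,b_j\in\{0,1\}$ for the indicators that $e_j^+,e_j^-$ are chosen, the resulting weight is $\sum_j(a_j-b_j)e_j$. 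Setting $A=\{j:\mu'_j=1\}$, $B=\{j:\mu'_j=-1\}$, $C=\{j:\mu'_j=0\}$, the pair $(a_j,b_j)$ is forced on $A\cup B$ and ranges over $\{(0,0),(1,1)\}$ on $C$; letting $D\subseteq C$ be the set where $(1,1)$ is chosen, the cardinality constraint becomes $|A|+|B|+2|D|=i$, with an additional allowed value $|A|+|B|+2|D|=i-1$ in case (iii) corresponding to also including the zero-weight vector.

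Finally, I match this count with the number of $\xi$ in the statement. Setting $\mu=\lambda+\mu'$, the vertical-strip conditions $\xi_j-\lambda_j,\xi_j-\mu_j\in\{0,1\}$ force $\xi_j$ uniquely on $A\cup B$ (namely $\xi_j=\lambda_j+1=\mu_j$ on $A$ and $\xi_j=\lambda_j=\mu_j+1$ on $B$) and leave two choices on each $j\in C$, parametrized by a subset $D\subseteq C$. This gives $|\xi|-|\lambda|=|A|+|D|$ and $|\xi|-|\mu|=|B|+|D|$, so $(|\xi|-|\lambda|)+(|\xi|-|\mu|)=|A|+|B|+2|D|$, matching the weight-vector parametrization term by term. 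Regularity of $\lambda$ guarantees that $\xi$ is automatically a valid Young diagram, since $\lambda_{j-1}\geq\lambda_j+1$ absorbs any fluctuation in the indicator of $A\cup D$ between consecutive rows. Hence the multiplicity of $\Pi_\mu$ in $\pilam\otimes\Lambda^i V$ equals the number of valid $\xi$ with $(|\xi|-|\lambda|)+(|\xi|-|\mu|)=i$ for (i) and (ii), and the total count for $(|\xi|-|\lambda|)+(|\xi|-|\mu|)\in\{i,i-1\}$ for (iii). The main bookkeeping subtlety will be case (iii), where the two allowed values of $(|\xi|-|\lambda|)+(|\xi|-|\mu|)$ have to be identified with excluding or including the zero-weight vector in the chosen $i$-element subset; the rest of the argument reduces to direct application of Theorem~\ref{thm pieri dp}.
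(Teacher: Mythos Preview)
Your proposal is correct and follows essentially the same approach as the paper's proof: both verify the hypothesis of Theorem~\ref{thm pieri dp} via the observation that weights of $\Lambda^i V$ have all coordinates in $\{-1,0,1\}$ while $(\lambda+\rho)_j-(\lambda+\rho)_{j+1}\ge 2$, and both then identify the weight multiplicity in $\Lambda^i V$ with the count of intermediate diagrams $\xi$ through the decomposition $\Lambda^i(W\oplus W^*)=\bigoplus_{j+k=i}\Lambda^j W\otimes\Lambda^k W^*$ (with the extra summand $\Lambda^{i-1}(W\oplus W^*)$ in case~(iii)). Your write-up is in fact slightly more explicit than the paper in two places---checking the tail dominance conditions for each type and verifying that $\xi$ is automatically a partition under the regularity assumption---but the strategy is the same.
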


\begin{proof}
	In case (i) and (ii), $V= W \oplus W^*$.
	Let $W=\{e_1, \ldots, e_n\}$, $W^{*}=\{f_1, \ldots, f_n\}$ such that $\langle e_i, f_j \rangle = \delta_{ij}$.
	
	Let $T$ be the maximal torus of $\G(V)$ defined by 
	\begin{equation*}
		te_i=t_ie_i, \quad tf_i=t_i^{-1}f_i, \quad \forall \ 1 \leq i \leq n. 
	\end{equation*}
	Since 
	\begin{equation*}
		\Lambda^i (V)=\Lambda^i (W \oplus W^*)= \sum_{j+k=i} \Lambda^j(W) \otimes \Lambda^k(W^*),
	\end{equation*}
	the characters of $T$ which appear in $\Lambda^i V$ are 
	\begin{equation}\label{eq t prod}
		t^\gamma=t_{\alpha_1} \cdots t_{\alpha_j} \cdot t_{\beta_1}^{-1} \cdots t_{\beta_k}^{-1},
	\end{equation}
	with $$ 1 \leq \alpha_1 < \alpha_2 < \cdots < \alpha_j \leq n, \quad 1 \leq \beta_1 < \beta_2 < \cdots < \beta_k \leq n. $$
	We have
	$$
	\rho=\begin{cases}
		ne_1 + (n-1)e_2 + \cdots + 2e_{n-1} + e_n & \text{ if } \quad G=\Sp(2n), \\
		(n-1)e_1 + (n-2)e_2 + \cdots + e_{n-1} & \text{ if } \quad G=\SO(2n).
	\end{cases}$$
	Therefore, writing $\rho= \rho_1 > \rho_2 > \cdots > \rho_n$, we have $\rho_i - \rho_{i+1}=1$ for all $i$, and as by hypothesis, $\lambda_i - \lambda_{i+1} \geq 1$ for all $i$ , we have: 
	\begin{equation}\label{eq lr diff}
		(\lambda_i + \rho_i) - (\lambda_{i+1} + \rho_{i+1}) \geq 2.
	\end{equation}
	
	From equation~(\ref{eq t prod}) and equation~(\ref{eq lr diff}), we find that
	$\lambda + \rho + \gamma$ are dominant for all weights $\gamma : T \to \C^*$ appearing in $\Lambda^i V= \Lambda^i (W \oplus W^*)$. 
	
	Therefore Theorem~\ref{thm pieri dp} applies, and we deduce this theorem from the characters of $T$ appearing in $\Lambda^i V$ through the equation~(\ref{eq t prod}). Putting $\lambda + \gamma = \mu$ in equation~\ref{eq t prod} implies 
	$$	t_{\alpha_1} \cdots t_{\alpha_j} \cdot t_{\beta_1}^{-1} \cdots t_{\beta_k}^{-1}=t^\gamma=t^{-\lambda + \mu}=t^{(-\lambda + \xi)} \cdot t^{\mu - \xi}=t^{(\xi-\lambda  )} \cdot t^{-(\xi-\mu)}.$$  
	More precisely,	 for
	$$t^{(\xi-\lambda  )} = t_{\alpha_1} \cdots t_{\alpha_j},  \quad t^{-(\xi-\mu)} = t_{\beta_1}^{-1} \cdots t_{\beta_k}^{-1},$$ 
	where Young diagram $\xi \supseteq \lambda$, $\xi \supseteq \mu$, we let $t^{\mu - \xi}$ and $t^{-(\lambda - \xi)}$ have the obvious meaning.

	For the case of  $\SO(V)= \SO(W \oplus W^{*} \oplus \C)$, 
	\begin{align*}
		\Lambda^i V&=\Lambda^i (W \oplus W^{*} \oplus \C) \\
		&= \Lambda^i (W \oplus W^{*}) \oplus \Lambda^{i-1}(W \oplus W^{*})\\
		&= \sum_{j+k=i \text{ or } i-1} \Lambda^jW \otimes \Lambda^kW^* ,
	\end{align*} 
	the rest of the conclusions are as in the case of $\Sp(V)$ and $\SO(W \oplus W^*)$ after we have noted that:
	$$\rho=\frac{2n-1}{2}e_1 + \frac{2n-3}{2}e_2 + \cdots + \frac{1}{2} e_n .$$ 
\end{proof}

\begin{remark}
	The version of Pieri rule for $\pilam \otimes \Sym^iV$ can be obtained from the Pieri rule for $\pilam \otimes \Lambda^i V$ using the duality involution $\omega : \Lambda \rightarrow \Lambda$ just as for $\GL(n, \C)$. This duality involution takes the basis $\chi_{\Sp}(\lambda)$ to $\chi_{\SO}(\lambda^t)$, allowing us to deduce the Pieri rule for  $\pilam \otimes \Sym^iV$. We refer to \cite{SO} for the statements and proofs.
\end{remark}

	\section{A Theorem of Kostant and a Generalization}
	The following theorem is due to Kostant, \cite{kostant}. The proof below is essentially his.
	
	\begin{theorem}\label{thm kostant initial}
		Let $(G, B, T)$ be a triple consisting of a reductive group $G$, a Borel subgroup $B$, and a maximal torus $T\subseteq B$ with $B=TN$. Assume that $B^{-}=TN^{-}$ is the opposite Borel subgroup. Let $\Pi_{\lambda}$ and $\Pi_{\mu}$ be two highest weight representations of $G$ associated to the dominant integral weights $\lambda$, $\mu$ $:T\to \C^{*}$. Then if a finite dimensional irreducible representation $\Pi_{\nu}$ of $G$ of highest weight $\nu : T \to \C^{*}$ appears in $\Pi_{\lambda} \otimes \Pi_{\mu}$, we must have 
		$$ \nu= \lambda + \tilde{\mu},$$
		where $\tilde{\mu}$ is a weight of $T$ in $\Pi_{\mu}$. The multiplicity of $\Pi_{\nu}$ in the
		tensor product $\Pi_{\lambda} \otimes \Pi_{\mu}$ is bounded by the multiplicity of  $\tilde{\mu}$ inside $\Pi_{\mu}$.
	\end{theorem}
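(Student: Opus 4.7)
The plan is to translate the multiplicity of $\Pi_\nu$ in $\Pi_\lambda \otimes \Pi_\mu$ into the dimension of the $\nu$-weight space of $N$-invariants in the tensor product, and then to exploit a projection onto the highest-weight line of the first factor. The multiplicity of $\Pi_\nu$ in $\Pi_\lambda \otimes \Pi_\mu$ equals the dimension of the subspace of $N$-invariant vectors in $\Pi_\lambda \otimes \Pi_\mu$ of $T$-weight $\nu$, so it suffices to control that subspace.

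First, I would fix a basis $\{v_\alpha\}$ of $\Pi_\lambda$ consisting of $T$-weight vectors, with $v_\lambda$ the (unique up to scalar) highest-weight vector. Any $N$-invariant vector $u \in \Pi_\lambda \otimes \Pi_\mu$ of weight $\nu$ then expands uniquely as $u = \sum_\alpha v_\alpha \otimes w_\alpha$, where $w_\alpha$ lies in the $(\nu-\alpha)$-weight space of $\Pi_\mu$. The central step is the following \emph{maximality claim}: if $u \neq 0$, then the coefficient $w_\lambda$ attached to the highest-weight vector is necessarily nonzero.

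To establish this, choose $\alpha_0$ maximal in the standard partial order among those weights with $w_{\alpha_0} \neq 0$. For each simple positive-root generator $E_\beta$, the invariance equation $E_\beta u = 0$ can be projected onto the $(\alpha_0 + \beta)$-weight component in the first tensor factor. The contribution coming from $v_{\alpha_0+\beta} \otimes E_\beta w_{\alpha_0+\beta}$ vanishes by maximality of $\alpha_0$, so the surviving terms force the weight-$(\alpha_0+\beta)$ part of $E_\beta v_{\alpha_0}$ to annihilate $w_{\alpha_0}$; since $w_{\alpha_0} \neq 0$, we conclude $E_\beta v_{\alpha_0} = 0$ for every simple $\beta$. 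Because the $E_\beta$ for simple $\beta$ generate $\mathfrak{n}$, this makes $v_{\alpha_0}$ a highest-weight vector, forcing $\alpha_0 = \lambda$. Hence $\nu - \lambda = \tilde{\mu}$ is a weight of $\Pi_\mu$, yielding $\nu = \lambda + \tilde{\mu}$.

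For the multiplicity bound, I would consider the projection $\pi_\lambda : \Pi_\lambda \to \C v_\lambda$ along the other weight spaces and the induced map $\phi = \pi_\lambda \otimes \mathrm{id} : \Pi_\lambda \otimes \Pi_\mu \to \Pi_\mu$, after identifying $\C v_\lambda \otimes \Pi_\mu$ with $\Pi_\mu$. A highest-weight vector $u$ of weight $\nu$ is sent to $w_\lambda$, which lies in the $(\nu - \lambda)$-weight space of $\Pi_\mu$, and the maximality claim shows this restriction is injective. This gives the inequality between the multiplicity of $\Pi_\nu$ in $\Pi_\lambda \otimes \Pi_\mu$ and the dimension of the $(\nu-\lambda)$-weight space of $\Pi_\mu$. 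The main obstacle is the careful bookkeeping in the maximality step: since $E_\beta v_{\alpha_0}$ is itself a sum of weight vectors and the weight spaces of $\Pi_\lambda$ may be higher-dimensional, one must cleanly isolate the component of $E_\beta u = 0$ that yields the required relation $E_\beta v_{\alpha_0} = 0$.
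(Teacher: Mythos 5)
Your argument is sound in outline and takes a genuinely different route from the paper's. The paper passes by duality to $\Hom_G(\Pi_\nu \otimes \Pi_\lambda^*, \Pi_\mu)$ and uses the PBW decomposition $\mathcal{U}(\mathfrak{g}) = \mathcal{U}(\mathfrak{n}^-)\,\mathcal{U}(\mathfrak{t})\,\mathcal{U}(\mathfrak{n})$ to see that $\Pi_\nu \otimes \Pi_\lambda^*$ is a cyclic $\mathcal{U}(\mathfrak{g})$-module, generated by $v_\nu \otimes v_\lambda^*$, the tensor of the highest-weight vector of $\Pi_\nu$ with the lowest-weight vector of $\Pi_\lambda^*$, of weight $\nu - \lambda$; evaluation on this line then embeds the $\Hom$-space into the $(\nu-\lambda)$-weight space of $\Pi_\mu$. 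You instead work inside $\Pi_\lambda \otimes \Pi_\mu$ itself, realizing the multiplicity as the dimension of the $N$-invariants of weight $\nu$ and showing by a maximality argument that projection onto the highest-weight line of the first factor is injective there. Both arguments hinge on the same phenomenon — the extremal weight line of one factor controls the map — but yours avoids the dual and the cyclic-generation claim, while the paper's is more compact and carries over verbatim to the parabolic setting of Theorem~\ref{thm gbt parabolic}.

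You correctly flag a gap in the maximality step, and as written it is a genuine one. When $\dim(\Pi_\lambda)_{\alpha_0} > 1$, the projection of $E_\beta u = 0$ onto the weight-$(\alpha_0+\beta)$ part of the first factor yields $\sum_j (E_\beta v_j) \otimes w_j = 0$, summed over a basis $\{v_j\}$ of $(\Pi_\lambda)_{\alpha_0}$; this does not force $E_\beta v_j = 0$ for any particular $j$. Moreover the phrase ``force the weight-$(\alpha_0+\beta)$ part of $E_\beta v_{\alpha_0}$ to annihilate $w_{\alpha_0}$'' is not well-formed, since $E_\beta v_{\alpha_0}$ is a vector, not an operator on the second factor. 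The repair is to argue with the full weight component at once: set $u_{\alpha_0} = \sum_j v_j \otimes w_j \in (\Pi_\lambda)_{\alpha_0} \otimes \Pi_\mu$, nonzero by choice of $\alpha_0$. The maximality computation shows $(E_\beta \otimes 1)\,u_{\alpha_0} = 0$ for every simple $\beta$, so $u_{\alpha_0}$ lies in $\bigl(\bigcap_\beta \ker E_\beta|_{(\Pi_\lambda)_{\alpha_0}}\bigr) \otimes \Pi_\mu$. Since $\Pi_\lambda$ is irreducible, the only $\mathfrak{n}$-singular weight is $\lambda$, so this intersection vanishes unless $\alpha_0 = \lambda$, and the maximality claim follows. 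With this correction your proof is complete.
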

	
	\begin{proof}
		Observe that 
		$$ \Pi_{\nu} \subseteq \Pi_{\lambda} \otimes \Pi_{\mu} \iff \text{ Hom}_G(\Pi_{\nu } \otimes \Pi_{\lambda }^{*}, \Pi_{\mu} ) \neq 0.$$
		Next observe that $\Pi_{\lambda }^{*}$ is the highest weight module of $G$ for the Borel subgroup $B^{-}=TN^{-}$ for the character $\lambda^{-1}: T \to \C^{*}$. We will treat representations of $G$ as representations of $\mathfrak{g}$ as well as representations of $\mathcal{U}(\mathfrak{g})$, the enveloping algebras of $\mathfrak{g}$, containing enveloping algebras of $\mathfrak{b}$, $\mathfrak{t}$, $\mathfrak{n}$, $\mathfrak{n}^{-}$, with the Poincare-Birkhoff-Witt theorem providing the decomposition
		$$\mathcal{U}(\mathfrak{g})=\mathcal{U}(\mathfrak{n}^{-}) \hspace*{1mm} \mathcal{U}(\mathfrak{t}) \hspace*{1mm} \mathcal{U}(\mathfrak{n}). $$ 
		
		Let $v_{\nu} \in \Pi_{\nu}$, and $v_{\lambda} \in \Pi_{\lambda}^{*}$ be the highest weight vectors for $B$ and $B^{-}$ respectively. As $\Pi_{\nu}$ and $\Pi_{\lambda}^{*}$ are irreducible.
		\begin{align*}
			\Pi_{\nu}&=\mathcal{U}(\mathfrak{g}) v_{\nu} =\mathcal{U}(\mathfrak{n}^{-}) v_{\nu},\\
			\Pi_{\lambda}^{*}&=\mathcal{U}(\mathfrak{g}) v_{\lambda} =\mathcal{U}(\mathfrak{n}^{+}) v_{\lambda}.
		\end{align*}
		It is easy to see therefore that 
		\begin{align*}
			\Pi_{\nu } \otimes \Pi_{\lambda }^{*} &= \left(\mathcal{U}(\mathfrak{n}^{-}) v_{\nu}\right) \otimes \left(\mathcal{U}(\mathfrak{n}^{+}) v_{\lambda}\right), \\
			&=\left(\mathcal{U}(\mathfrak{n}^{-}) \otimes \mathcal{U}(\mathfrak{n}^{+}) \right)\left(v_{\nu} \otimes v_{\lambda}\right), \\
			&= \mathcal{U}(\mathfrak{g}) \left(v_{\nu} \otimes v_{\lambda}\right).
		\end{align*}
		Therefore 
		$$\text{Hom}_{\mathcal{U}(\mathfrak{g})} \left( \Pi_{\nu } \otimes \Pi_{\lambda }^{*} , \Pi_{\mu} \right) \subseteq  \text{Hom}_{T} \left(v_{\nu} \otimes v_{\lambda} , \Pi_{\mu} \right).$$
		As $T$ acts by $\nu \cdot \lambda^{-1}$ on $v_{\nu} \otimes v_{\lambda}$, 
		$$\text{Hom}_{T} \left(v_{\nu} \otimes v_{\lambda} , \Pi_{\mu} \right) = \Pi_{\mu} (\nu \cdot  \lambda^{-1}).$$
		Thus $$\nu = \lambda + \text{ a weight in } \Pi_{\mu} ,$$
		completing the proof of the theorem.	 
	\end{proof}
	
	\begin{remark}\label{rem strong ckt}
	As stated in Kostant's Theorem~\ref{thm kostant initial}, if $ \Pi_{\nu} \subseteq \Pi_{\lambda} \otimes \Pi_{\mu} $, then 
	$\nu = \lambda + \text{a weight in } \Pi_{\mu}$. 
	A natural question is whether the converse holds: if 
	$\nu = \lambda + \text{a weight in } \Pi_{\mu}$, 
	is dominant, does it follow that 
	$ \Pi_{\nu} \subseteq \Pi_{\lambda} \otimes \Pi_{\mu} $? 
	Theorem~\ref{thm pieri dp} may be viewed as a strong form of this converse to 
	Kostant's Theorem~\ref{thm kostant initial}.
	
\end{remark}	

\begin{remark}\label{rem ckt in pieri}
	\begin{enumerate}
		\item We note that as the representation $\Lambda^iV$ of $\GL(V)$ is multiplicity free,
		the tensor product $\pilam \otimes \Lambda^iV$  is multiplicity free by Kostant's Theorem \ref{thm kostant initial}.
		The Pieri rule for general linear groups in the case of $\Lambda^iV$ (Theorem~\ref{thm ext dp})  is equivalent to saying that the converse of Kostant's Theorem \ref{thm kostant initial} holds.  However, Pieri rule for general linear groups in the case $\Sym^iV$ (Theorem~\ref{thm pieri gen dp}), although $\pilam \otimes \Sym^iV$ is multiplicity free, is not equivalent to the converse of Kostant's Theorem; in more detail, there are dominant weights $\nu$ of $\GL(V)$ of the form $ \nu= \lambda + \tilde{\mu}$,
		where $\tilde{\mu}$ is a weight  in $\Sym^iV$ but $\Pi_{\nu}$ does not appear in the
		tensor product $\Pi_{\lambda} \otimes \Sym^iV$. As an example, consider the tensor product decomposition $\Pi_{(2,1,1)} \otimes \Sym^2(\C^3)=\Pi_{(3,2,1)} \oplus \Pi_{(4,1,1)}$ as $\GL(3)$ modules. Weights of $\Sym^2(\C^3)$ are $(1, 1, 0)$, $(1, 0, 1)$, $(0, 1, 1)$, $(2, 0, 0)$, $(0, 2, 0)$, $(0, 0, 2)$. We have a dominant wight $\nu=\lambda + \tilde{\mu}=(2, 1, 1)+(0, 1, 1)=(2,2,2)$ for which the irreducible representation $\Pi_{(2,2,2)}$ does not appear in the tensor product $\Pi_{(2,1,1)} \otimes \Sym^2(\C^3)$.
		
		\item Computations using SageMath software suggest that the converse of Kostant's theorem is true in the context of Pieri rules for $\Lambda^iV $ for symplectic and orthogonal groups too. Specifically, $\Pi_{\nu} \subseteq \Pi_{\lambda} \otimes \Lambda^iV $ if and only if $\nu = \lambda + (\text{a weight in } \Lambda^iV  )$ is dominant, without  equality of multiplicities. 
	\end{enumerate}
	 
\end{remark}

	The above Theorem \ref{thm kostant initial} and proof works verbatim by replacing $B$ by a parabolic $P$, giving us the following theorem whose proof we will omit.
	
	\begin{theorem}\label{thm gbt parabolic}
		Let $(G, B, T)$ be a triple consisting of a reductive group $G$, a Borel subgroup $B$, and a maximal torus $T\subseteq B$. Let $P \supseteq B$ be a parabolic subgroup of $G$ with $P=MN$, $T \subseteq M$, its Levi decomposition. Let $P^{-}=MN^{-}$ be the opposite parabolic. Let $\Pi_{\lambda}$ and $\Pi_{\mu}$ be two finite dimensional highest weight representations of $G$ associated to the dominant integral weights $\lambda$, $\mu$ $:T\to \C^{*}$. Let $M_{\lambda}$ and $M_{\mu}$ be the corresponding finite dimensional highest weight representations of $M$. Then if a finite dimensional irreducible representation $\Pi_{\nu}$ of $G$ of highest weight $\nu: T \to \C^{*}$ appears in $\Pi_{\lambda} \otimes \Pi_{\mu}$, we must have 
		$$M_{\nu} \subseteq M_{\lambda} \otimes \left( \Pi_{\mu}\left.|\right._M \right),$$
		as $M$ modules. The multiplicity of $\Pi_{\nu}$ inside
		tensor product $\Pi_{\lambda} \otimes  \Pi_{\mu}$ as a $G$ module is bounded by the multiplicity of $M_{\nu}$ inside  $M_{\lambda} \otimes \left( \Pi_{\mu}\left.|\right._M \right)$ as $M$ modules.
	\end{theorem}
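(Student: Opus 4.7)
The plan is to reproduce the proof of Theorem~\ref{thm kostant initial} almost verbatim, with two adjustments: replace the highest weight lines $\C v_\nu$ and $\C v_\lambda$ by the irreducible $M$-submodules they generate, and replace the $T$-equivariance of the restricted map by $M$-equivariance.

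As in the Borel case, $\Pi_\nu \subseteq \Pi_\lambda \otimes \Pi_\mu$ is equivalent to
\[
\text{Hom}_G\bigl(\Pi_\nu \otimes \Pi_\lambda^{*},\, \Pi_\mu\bigr) \neq 0,
\]
where $v_\nu \in \Pi_\nu$ and $v_\lambda \in \Pi_\lambda^{*}$ denote the $B$- and $B^{-}$-highest weight vectors. Since $B \subseteq P$, the nilradicals of $B \cap M$ and $B^{-}\cap M$ are contained in those of $B$ and $B^{-}$ respectively, so $v_\nu$ and $v_\lambda$ are simultaneously highest weight vectors for these opposite Borels of $M$. Hence the $M$-submodules
\[
M_\nu = \mathcal{U}(\mathfrak{m})\,v_\nu \subseteq \Pi_\nu, \qquad M_\lambda^{*} = \mathcal{U}(\mathfrak{m})\,v_\lambda \subseteq \Pi_\lambda^{*}
\]
are irreducible $M$-modules; the first is the $M_\nu$ appearing in the theorem, and the second is canonically isomorphic to the $M$-dual of $M_\lambda$ since $v_\lambda$ has $(B\cap M)$-lowest weight $-\lambda$.

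The key observation is that Kostant's cyclicity statement $\Pi_\nu \otimes \Pi_\lambda^{*} = \mathcal{U}(\mathfrak{g})(v_\nu \otimes v_\lambda)$, already established in the proof of Theorem~\ref{thm kostant initial}, immediately upgrades to
\[
\Pi_\nu \otimes \Pi_\lambda^{*} \;=\; \mathcal{U}(\mathfrak{g})\bigl(M_\nu \otimes M_\lambda^{*}\bigr),
\]
because $v_\nu \otimes v_\lambda \in M_\nu \otimes M_\lambda^{*}$. Consequently every $G$-homomorphism $\Pi_\nu \otimes \Pi_\lambda^{*} \to \Pi_\mu$ is determined by its restriction to $M_\nu \otimes M_\lambda^{*}$, and that restriction is manifestly $M$-equivariant with image in $\Pi_\mu|_M$. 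This yields an injection
\[
\text{Hom}_G\bigl(\Pi_\nu \otimes \Pi_\lambda^{*},\, \Pi_\mu\bigr)
\;\hookrightarrow\;
\text{Hom}_M\bigl(M_\nu \otimes M_\lambda^{*},\, \Pi_\mu|_M\bigr),
\]
and the standard tensor-hom adjunction for finite-dimensional $M$-modules rewrites the target as $\text{Hom}_M\bigl(M_\nu,\, M_\lambda \otimes \Pi_\mu|_M\bigr)$.

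Non-vanishing of the right-hand side is, by Schur's lemma and irreducibility of $M_\nu$, exactly the assertion $M_\nu \subseteq M_\lambda \otimes (\Pi_\mu|_M)$, and the dimension comparison along the same injection delivers the required multiplicity bound. The only genuinely new ingredient beyond Theorem~\ref{thm kostant initial} is identifying $v_\nu$ and $v_\lambda$ as $M$-highest weight vectors generating irreducible $M$-modules; the cyclicity of the tensor product, which in the Borel case required a PBW argument, is inherited for free here. Specialising $P = B$ and $M = T$ collapses $M_\nu$ and $M_\lambda^{*}$ to the weight lines $\C v_\nu$ and $\C v_\lambda$ and recovers Theorem~\ref{thm kostant initial} verbatim.
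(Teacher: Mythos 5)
Your proof is correct and follows the paper's stated approach, which is to replace $B$ by $P$ (equivalently, $T$ by the Levi factor $M$) throughout the proof of Theorem~\ref{thm kostant initial}: identify $v_\nu$ and $v_\lambda$ as $(B\cap M)$- and $(B^-\cap M)$-highest weight vectors generating irreducible $M$-submodules $M_\nu$ and $M_\lambda^\vee$, obtain the injection $\mathrm{Hom}_G(\Pi_\nu\otimes\Pi_\lambda^*,\Pi_\mu)\hookrightarrow\mathrm{Hom}_M(M_\nu\otimes M_\lambda^\vee,\Pi_\mu|_M)$, and conclude by tensor--hom adjunction. Your one small shortcut --- deducing $\Pi_\nu\otimes\Pi_\lambda^* = \mathcal{U}(\mathfrak{g})(M_\nu\otimes M_\lambda^\vee)$ directly from the Borel-case cyclicity $\mathcal{U}(\mathfrak{g})(v_\nu\otimes v_\lambda)=\Pi_\nu\otimes\Pi_\lambda^*$, rather than redoing the PBW argument with the parabolic decomposition --- is a clean observation that does not change the substance.
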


	We use Theorem~\ref{thm gbt parabolic} to make an observation on representations of classical groups 
	$$\Sp(V)=\Sp(W \oplus W^{*}), \text{ and } \SO(V)=\SO(W \oplus W^{*}) .$$
	
	\begin{theorem}\label{thm kostant parabolic}
		Let $\Pi_{\lambda}$ be an irreducible highest weight module of $G(V)=\Sp(V)=\Sp(W \oplus W^{*})$ or $G(V)=\SO(V)=\SO(W \oplus W^{*}) $, where $\lambda : T \to \C^{*}$ is a dominant integral weight. The group $G(V)$ has a maximal parabolic $P=MN$ with $M=\GL(W) \supseteq T$. Let $M_{\lambda}$ be the corresponding highest weight module of $\GL(W)$. Then for a dominant integral weight $\nu:T\to \C^{*}$, and an integer $0 \leq i \leq \dim V$, if we have 
		$$\Pi_{\nu} \subseteq \Pi_{\lambda} \otimes \Lambda^i(V),$$
		there are integers $(j,k)$ with $0 \leq j \leq  \dim V$,  $0 \leq k \leq \dim V$ such that 
		$$\text{Hom}_{\GL(W)} \left(M_{\nu}\otimes \Lambda^kW, M_{\lambda}\otimes \Lambda^j W  \right) \neq 0,$$
		where $M_{\lambda}$, $M_{\nu}$ are the highest weight representations of $\GL(W)$ with highest weights $\lambda, \nu  : T \to \C^{*}$. 
	\end{theorem}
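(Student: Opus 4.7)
The strategy is to apply the extended Kostant theorem (Theorem~\ref{thm gbt parabolic}) with the maximal parabolic $P = MN \subseteq G(V)$ whose Levi factor is $M = \GL(W)$ (the Siegel-type case), and then unpack the restriction $\Lambda^i V |_{\GL(W)}$ via the standard decomposition of the exterior algebra of a direct sum, finishing with a tensor-Hom adjunction to put the conclusion in the stated form.

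Concretely, I would first apply Theorem~\ref{thm gbt parabolic} with second tensor factor $\Lambda^i V$. Although $\Lambda^i V$ may be reducible as a $G(V)$-module, one can either apply the theorem to each irreducible summand of $\Lambda^i V$ separately, or observe that the proof of Theorem~\ref{thm gbt parabolic} does not actually use irreducibility of the second factor. Either way, one obtains
$$M_\nu \;\subseteq\; M_\lambda \otimes \bigl(\Lambda^i V \big|_M\bigr)$$
as $\GL(W)$-modules. Second, since $V = W \oplus W^*$ as a $\GL(W)$-representation, and since the non-degenerate form on $V$ canonically identifies $W^*$ with the contragredient $\GL(W)$-module $W^\vee$, the standard exterior-algebra identity gives
$$\Lambda^i V \big|_M \;\cong\; \bigoplus_{j+k=i} \Lambda^j W \otimes (\Lambda^k W)^\vee.$$
Consequently, there must exist a pair $(j, k)$ with $j+k = i$ for which
$$\Hom_{\GL(W)}\bigl(M_\nu,\; M_\lambda \otimes \Lambda^j W \otimes (\Lambda^k W)^\vee\bigr) \;\neq\; 0.$$

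Finally, applying the tensor-Hom adjunction (valid because $\Lambda^k W$ is finite-dimensional) converts this into
$$\Hom_{\GL(W)}\bigl(M_\nu \otimes \Lambda^k W,\; M_\lambda \otimes \Lambda^j W\bigr) \;\neq\; 0,$$
which is the desired conclusion; the required bounds $0 \leq j, k \leq \dim V$ follow at once from $j + k = i \leq \dim V$. No substantive obstacle arises: once Theorem~\ref{thm gbt parabolic} is in hand, the argument reduces to the branching of $\Lambda^i V$ to the Siegel Levi $\GL(W)$ together with a duality manipulation. The only point worth flagging is the identification $W^*|_{\GL(W)} \cong W^\vee$, which is immediate from the fact that $W$ and $W^*$ sit inside $V$ as transverse maximal isotropic subspaces placed in non-degenerate duality by the form defining $G(V)$.
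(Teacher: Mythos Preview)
Your proof is correct and follows essentially the same route as the paper: apply Theorem~\ref{thm gbt parabolic} to obtain $M_\nu \subseteq M_\lambda \otimes (\Lambda^i V)|_{\GL(W)}$, decompose $\Lambda^i V \cong \bigoplus_{j+k=i}\Lambda^j W \otimes \Lambda^k W^*$ as a $\GL(W)$-module, and then move $\Lambda^k W^*$ across the $\Hom$. Your write-up is slightly more careful than the paper's in two places: you flag that $\Lambda^i V$ need not be irreducible (and correctly note that either one argues summand-by-summand or observes that the proof of Theorem~\ref{thm gbt parabolic} does not use irreducibility of the second factor), and you make the tensor--Hom adjunction and the identification $W^*|_{\GL(W)}\cong W^\vee$ explicit.
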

	\begin{proof}
		By Theorem~\ref{thm gbt parabolic}, if 
		$$\Pi_{\nu} \subseteq \Pi_{\lambda} \otimes \Lambda^i(V),$$
		then 
		$$M_{\nu} \subseteq M_{\lambda} \otimes \left(\Lambda^iV\right)|_{\GL(W)}.$$
		But as $V=W \oplus W^{*}$, 
		$$\Lambda^iV=\sum_{j+k=i} \Lambda^jW \otimes \Lambda^kW^{*}, $$
		therefore if
		$$M_{\nu} \subseteq M_{\lambda} \otimes \left(\Lambda^iV\right)|_{\GL(W)},$$ then
		$$M_{\nu} \subseteq M_{\lambda} \otimes \Lambda^j W \otimes \Lambda^k W^{*},$$
		for some $j$ and $k$ with $j+k=i$.
		Equivalently,
		$$\text{Hom}_{\GL(W)} \left(M_{\nu} \otimes \Lambda^k W , \hspace*{1mm} M_{\lambda} \otimes \Lambda^j W  \right) \neq 0, $$
		proving the theorem.
	\end{proof}
	
	\begin{remark}\label{rem equivalent to CKT}
	By Pieri rule for $\GL(W)$, submodules of $M_{\nu} \otimes \Lambda^k W $ are $M_{\xi}$ such that the Young diagram of $\xi$ is obtained from the Young diagram of $\nu$ by adding $k$ boxes no two in the same row. 
	
	Similarly $M_{\xi} \subseteq M_{\lambda} \otimes \Lambda^j W $ if and only if the Young diagram of $\xi$ is obtained from the Young diagram of $\lambda$ by attaching $j$ boxes, no two in the same row. Thus if 
	$$\text{Hom}_{\GL(W)} \left(M_{\nu} \otimes \Lambda^k W , \hspace*{1mm} M_{\lambda} \otimes \Lambda^j W  \right) \neq 0, $$
	then
	$$|\xi| - |\nu|= k, \quad |\xi| - |\lambda|= j.$$ 
	Therefore we have obtained a necessary condition on submodules of $\Pi_{\lambda} \otimes \Lambda^i(V)$, which turns out to be sufficient for symplectic groups.
	
	The computations above imply that Pieri's rule about 
	$$\Pi_{\nu} \subseteq \Pi_{\lambda} \otimes \Lambda^iV$$
	is equivalent to the converse to extended Kostant's theorem, i.e., 
	$$\Pi_{\nu} \subseteq \Pi_{\lambda} \otimes \Lambda^iV \iff M_{\nu} \subseteq M_{\lambda} \otimes \left(\Lambda^iV \right)|_{\GL(W)}  $$ as $\GL(W)$ modules; more precisely
	$$\dim \Hom_{\Sp(2n)} \left(\Pi_{\nu}, \Pi_{\lambda} \otimes \Lambda^iV\right)= \dim \Hom_{\GL(W)} \left(M_{\nu},   M_{\lambda} \otimes \left(\Lambda^iV \right)|_{\GL(W)} \right).$$
\end{remark}

	\begin{remark}
		If $\lambda$, $\mu$ are highest weight of irreducible representations $\Pi_{\lambda} $, $\Pi_{\mu}$ of $\Sp(2n, \C)$, let 
		\begin{align*}
			\lambda^{+}&=\lambda +(1,1,\ldots, 1),\\
			\mu^{+}&=\mu   +(1,1,\ldots, 1).
		\end{align*}	
		Then it follows for the Pieri rule that 
		$$m\left(\Pi_{\mu}, \Pi_{\lambda} \otimes \Lambda^d \C^{2n} \right) = m\left(\Pi_{\mu^{+}}, \Pi_{\lambda^{+}} \otimes \Lambda^d \C^{2n} \right) .$$
		Corresponding statement is not true about symmetric powers.
	\end{remark}

	\begin{remark}
		The above proof reducing Pieri rule for classical groups from the Pieri rule for $\GL(n, \C)$ has been asserted only for $\Sp(2n, \C)$ and $\SO(2n, \C)$, but works just as well for $\SO(2n+1, \C)$ with the only difference that instead of $j+k=i$ for $\Sp(2n, \C)$ and $\SO(2n, \C)$, we have $j+k=i$ or $j+k=i-1$ as in Theorem~\ref{thm pieri dp2}(iii). Furthermore, the form of Pieri rule for $\Pi_{\lambda} \otimes \Sym^i(V)$ for classical groups reduces to $\GL(W)$ in exactly the same way. Since 
		$$\Sym^i(W \oplus W^{*})=\sum_{j+k=i} \Sym^j W \otimes \Sym^k (W^{*}),$$
		just as 
		$$\Lambda^i(W \oplus W^{*})=\sum_{j+k=i} \Lambda^jW \otimes \Lambda^kW^{*}. $$
	\end{remark}

	\section{Equivalence of Pieri and Branching for General Linear Groups}\label{ch equivalance}
	
	In this section, we establish the equivalence between branching from $\GL(n+1, \C)$ to $\GL(n, \C)$ and the Pieri rule for general linear groups. 
	
	\subsection{A Case of Howe Duality}
	In this subsection we analyze the natural action of $\GL(n, \C) \times \GL(m, \C)$, $n \leq m$, on the space of polynomial functions on the space of $n \times m$ matrices. This is the simplest case of what is called the Howe duality. This will be needed in the next subsection when we prove the equivalence of Pieri rule and the branching law from $\GL(n+1, \C)$ to $\GL(n, \C)$.
	\begin{lemma}
		Let $G$ be a reductive group operating on an affine variety $X$ with the ring of polynomial functions $\mathcal{S}(X)$. Suppose $B=TU$ is a Borel subgroup in $G$ which operates on $X$ with an open orbit passing through $x_0 \in X$, i.e., with $Bx_0$ an open subset of $X$. Suppose $T_0 \subset T$ is the stabilizer of $x_0 \in X$ in $T$. Then,
		\begin{enumerate}
			\item  The $G$-module $\mathcal{S}(X)$ is multiplicity free.
			\item  If an irreducible highest weight module $\pi_{\lambda}$, $\lambda : T \to \C^*$, is a submodule of $\mathcal{S}(X)$, then $\lambda|_{T_0}=1$. 
			\item If for $\lambda_1 : T \to \C^*$, $\lambda_2 : T \to \C^*$, the highest weight modules $\pi_{\lambda_1} \subseteq \mathcal{S}(X)$, $\pi_{\lambda_2} \subseteq \mathcal{S}(X)$, then $\pi_{\lambda_1 \lambda_2} \subseteq \mathcal{S}(X)$ also. (Here $\lambda_1 \lambda_2$ is the product of the characters $\lambda_1$ and $\lambda_2$).
		\end{enumerate} 
	\end{lemma}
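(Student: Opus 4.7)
The plan is to reduce all three assertions to the $T$-module structure of $\mathcal{S}(X)^U$, leaning on two standard inputs. Since $G$ is reductive acting algebraically on $X$, the representation $\mathcal{S}(X)$ is completely reducible, and by the highest-weight dictionary, the multiplicity of $\pi_\lambda$ inside $\mathcal{S}(X)$ equals the dimension of the $\lambda$-weight space of $\mathcal{S}(X)^U$, while any nonzero $U$-invariant $T$-eigenvector of dominant weight $\lambda$ in $\mathcal{S}(X)$ generates a submodule isomorphic to $\pi_\lambda$.

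The main step will be to identify $\mathcal{S}(X)^U$ as a $T$-module by restricting to the open orbit $B x_0$. Since $B x_0$ is open in $X$ (and so dense once we note that the existence of an open $B$-orbit implicitly forces $X$ to be irreducible), the restriction map $\mathcal{S}(X) \hookrightarrow \mathcal{O}(B x_0)$ is injective. Writing $B_0$ for the stabilizer of $x_0$ in $B$, so that $B x_0 \cong B/B_0$, we obtain
\[
\mathcal{O}(B x_0)^U \;=\; \mathcal{O}(U \backslash B / B_0) \;=\; \mathcal{O}(T/\pi(B_0)),
\]
where $\pi : B \to B/U \cong T$ is the projection coming from $B = TU$. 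The residual $T$-action on $T/\pi(B_0)$ is by translation, so its $\lambda$-eigenspace is spanned by (at most) the character $\lambda$ of $T$ and is nonzero precisely when $\lambda|_{\pi(B_0)} = 1$. This at once gives (1), since $\dim \mathcal{S}(X)^U_\lambda \leq \dim \mathcal{O}(T/\pi(B_0))_\lambda \leq 1$, and it gives (2), because the inclusion $T_0 = T \cap B_0 \subseteq \pi(B_0)$ upgrades ``trivial on $\pi(B_0)$'' to ``trivial on $T_0$''.

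For (3), I pick nonzero highest weight vectors $f_i \in \mathcal{S}(X)^U_{\lambda_i}$ realizing the embeddings $\pi_{\lambda_i} \hookrightarrow \mathcal{S}(X)$ for $i = 1, 2$. Their product $f_1 f_2$ is manifestly $U$-invariant and a $T$-eigenvector of weight $\lambda_1 \lambda_2$, and it is nonzero because $\mathcal{S}(X)$ is an integral domain. By complete reducibility, $f_1 f_2$ generates a submodule of $\mathcal{S}(X)$ isomorphic to $\pi_{\lambda_1 \lambda_2}$, which is (3). I expect the main subtle point to be the clean identification $U \backslash B / B_0 \cong T / \pi(B_0)$ together with the resulting description of $T$-weight vectors as characters of $T$ trivial on $\pi(B_0)$; once that is set up, the three conclusions drop out from complete reducibility and an elementary $T$-weight count.
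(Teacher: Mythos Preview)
Your argument is correct and is essentially a fleshed-out version of the paper's one-line proof: both identify $\Hom_G(\pi_\lambda,\mathcal{S}(X))$ with $B$-eigenfunctions of eigencharacter $\lambda$ (equivalently, the $\lambda$-weight space of $\mathcal{S}(X)^U$), restrict to the open orbit $Bx_0\cong B/B_0$, and read off the three conclusions from the fact that such a function is determined by its value at $x_0$. One small correction: an open $B$-orbit does not by itself force $X$ to be irreducible (e.g.\ $\mathbb{G}_m$ scaling on $\mathbb{A}^1\sqcup\{\mathrm{pt}\}$), so the injectivity of restriction to $Bx_0$ and the integrality of $\mathcal{S}(X)$ used in part~(3) require irreducibility of $X$ as a standing hypothesis---which the paper, like you, is tacitly assuming and which holds in all the applications that follow.
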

	
	\begin{proof}
		By the theory of highest weights, $\mathrm{Hom}_G(\pi_{\lambda}, \mathcal{S}(X))$ is in bijective correspondence with functions $f$ on $X$ such that 
		$$ bf=\lambda(b) \cdot f.$$
		Since $B$ has an open orbit on $X$, this needs to be checked only on the open orbit, easily giving the three conclusions of the lemma.
	\end{proof}
	For further details on Borel dense orbit, see \cite[Proposition 1.3.3]{HoweIsrael} or \cite[Lemma 2.12]{Michelbrion}.
	
	\begin{corollary}\label{cor smnc}
		Let $G=\GL(n, \C) \times \GL(n, \C)$ operate on $M(n, \C)$ by $(g_1, g_2)X=g_1Xg_2^{-1}$, then 
		$$\mathcal{S}(M(n, \C))= \bigoplus_{\lambda } \pi_{\lambda} \otimes \pi_{\lambda}^{\vee},$$
		where  $\lambda=(\lambda_1 \geq \cdots \geq \lambda_n \geq 0)$ is a highest weight for $\GL(n, \C)$.
	\end{corollary}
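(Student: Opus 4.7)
The plan is to apply the preceding lemma to $G = \GL(n, \C) \times \GL(n, \C)$ acting on $X = M(n, \C)$ by $(g_1, g_2) \cdot A = g_1 A g_2^{-1}$, using the Borel subgroup $B = B_+ \times B_-$ (with $B_+$ upper triangular and $B_-$ lower triangular), maximal torus $T \times T$, and base point $x_0 = I$, the identity matrix.

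First, I would verify the open-orbit hypothesis. Under this action, $B \cdot I = B_+ \cdot B_-^{-1} = B_+ B_-$, which is the $(B_+, B_-)$-Bruhat double coset through the identity in $\GL(n, \C)$; a dimension count $\dim B_+ + \dim B_- - \dim(B_+ \cap B_-) = n^2$ confirms that $B_+ B_-$ is open dense in $\GL(n, \C)$, which is itself open dense in $M(n, \C)$. The stabilizer of $I$ in $T \times T$ is clearly the diagonal $T_0 = \{(t, t) : t \in T\}$.

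Next, I would translate the lemma's conclusions into structural information. An irreducible representation of $\GL(n, \C) \times \GL(n, \C)$ has the form $\pi_\lambda \otimes \pi_\mu$, and its highest weight with respect to $B_+ \times B_-$ is $(\lambda, w_0 \mu)$, where $w_0$ is the longest Weyl element (so $w_0 \mu$ is the highest weight of $\pi_\mu$ with respect to $B_-$, i.e., its lowest weight with respect to $B_+$). Part (2) of the lemma forces this character to be trivial on $T_0$, i.e., $\lambda + w_0 \mu = 0$, giving $\mu = -w_0 \lambda$, which is precisely the highest weight of $\pi_\lambda^\vee$. Consequently every irreducible summand of $\mathcal{S}(M(n, \C))$ is of the form $\pi_\lambda \otimes \pi_\lambda^\vee$, and by part (1) each such appears with multiplicity at most one.

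Finally, for the existence half I would exhibit each fundamental summand explicitly: the space of $k \times k$ minors of the generic matrix is a $G$-invariant subspace of $\mathcal{S}(M(n, \C))$ isomorphic to $\Lambda^k V \otimes \Lambda^k V^* \cong \pi_{\omega_k} \otimes \pi_{\omega_k}^\vee$ for each $1 \le k \le n$. Applying part (3) of the lemma iteratively, the set of highest weights realized by summands is closed under addition, so every $\lambda = \sum_{k=1}^{n} m_k \omega_k$ with $m_k \ge 0$ occurs; since such sums exhaust all polynomial dominant weights $\lambda_1 \ge \cdots \ge \lambda_n \ge 0$, the decomposition follows. I expect the main technical care to lie in correctly identifying $-w_0 \lambda$ as the highest weight of $\pi_\lambda^\vee$ (and $w_0 \mu$ as the highest weight of $\pi_\mu$ for the opposite Borel); once this bookkeeping of highest weights under $B_-$ is in place, the lemma does the rest of the work.
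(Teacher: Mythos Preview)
Your proof is correct and follows the paper's approach closely: the same Borel $B_+ \times B_-$, the same base point $x_0 = I$, the same identification of $T_0$ as the diagonal torus, and the same conclusion from part~(2) of the lemma that only summands of the form $\pi_\lambda \otimes \pi_\lambda^\vee$ can occur (your bookkeeping with $w_0$ makes explicit what the paper states in one line). The only genuine difference is in the existence half: the paper simply remarks that matrix coefficients of the polynomial representation $\pi_\lambda$ embed $\pi_\lambda \otimes \pi_\lambda^\vee$ into $\mathcal{S}(M(n,\C))$, giving every $\lambda$ at once, whereas you first realize the fundamental summands $\pi_{\omega_k}\otimes\pi_{\omega_k}^\vee$ via the span of $k\times k$ minors and then invoke part~(3) of the lemma (closure under addition of highest weights) to reach every polynomial dominant weight. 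Both routes are short and valid; the matrix-coefficient argument is a touch more direct, while yours has the pleasant feature of actually exercising part~(3) of the lemma.
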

	
	\begin{proof}
		This follows from the previous lemma on noting that $B \subseteq G=\GL(n, \C) \times \GL(n, \C)$ has an open orbit on $M(n, \C)$. More precisely, if
		\begin{equation*}
			B=B_1 \times B_1^0
		\end{equation*}
		where $B_1$ is the group of upper triangular matrices, and $B_1^0$ the group of lower triangular matrices, then $$B_1 \cdot \mathrm{Id} \cdot B_1^0 \text{ is open.}$$  Furthermore, in the notation of the previous lemma, 
		$T_0 \subset T \times T  $ consists of $T_0=\left \{(t,t) | t \in T\right\}$.
		Therefore $\chi= \chi_1 \times \chi_2$,
		\begin{equation*}
			\text{a character of } T \times T \text{ is trivial on }  T_0 \iff \chi_1=\chi_2^{-1}.
		\end{equation*}
		This is exactly the condition under which $\pi_{\chi_1} \otimes \pi_{\chi_2}$ where $\pi_{\chi_1}$ the highest weight module for $\GL(n, \C)$ for the Borel subgroup $B_1$ and $\pi_{\chi_2}$ the highest weight module for $\GL(n, \C)$ for the opposite Borel subgroup $B_1^0$, becomes $\pi_{\chi_1} \otimes \pi_{\chi_1}^{\vee}$.
		
		At this point, we have proved that
		$$\mathcal{S}(M(n, \C)) \subseteq \bigoplus_{\lambda  } \pi_{\lambda} \otimes \pi_{\lambda}^{\vee}.$$
		On the other hand $\pi_{\lambda} \otimes \pi_{\lambda}^{\vee}$ clearly sits inside $\mathcal{S}(M(n, \C))$ using matrix coefficients of the representation $\pi_{\lambda}$, completing the proof of the corollary.
	\end{proof}
	
	\begin{corollary}\label{cor Howe}
		For $n \leq m$, $G=\GL(n, \C) \times \GL(m, \C)$, operates on $X=\{n\times m \text{ matrices over } \C \}$. Then 
		$$\mathcal{S}(X)= \bigoplus_{\lambda } \pi_{\lambda} \otimes \pi_{\lambda}^{\vee},$$
		where $\lambda=(\lambda_1 \geq \cdots \geq \lambda_n \geq 0)$ giving a highest weight for $\GL(n, \C)$ as well as $\GL(m, \C)$ for all $m \geq n $.
	\end{corollary}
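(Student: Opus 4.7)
The plan is to follow the same strategy as Corollary~\ref{cor smnc}: apply the preceding lemma with a Borel subgroup of $G = \GL(n,\C) \times \GL(m,\C)$ that has an open orbit on $X = M(n \times m, \C)$, and then verify the reverse inclusion by explicitly producing each summand inside $\mathcal{S}(X)$. Unlike the square case, it turns out to be necessary to use a \emph{mixed} Borel: I would take $B = B_1^0 \times B_2$, the lower-triangular Borel of $\GL(n,\C)$ times the upper-triangular Borel of $\GL(m,\C)$, together with the basepoint $x_0 = [\,I_n \mid 0_{n \times (m-n)}\,] \in X$.

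A block computation then shows
$$B \cdot x_0 \;=\; \{[A \mid N] : A \in B_1^0 B_1, \ N \in M_{n \times (m-n)}(\C)\},$$
which is Zariski open in $X$ because the big cell $B_1^0 B_1$ is open and dense in $\GL(n,\C)$. Writing $t_1 = \mathrm{diag}(r_1,\ldots,r_n)$ and $t_2 = \mathrm{diag}(s_1,\ldots,s_m)$, the equation $t_1 x_0 t_2^{-1} = x_0$ reduces at once to $r_i = s_i$ for $1 \leq i \leq n$, so $T_0 = \{(t_1,t_2) : r_i = s_i\}$. By part (2) of the lemma, a character $(\lambda_1,\ldots,\lambda_n) \otimes (\mu_1,\ldots,\mu_m)$ of $T_1 \times T_2$ is trivial on $T_0$ iff $\mu_i = -\lambda_i$ for $i \leq n$ and $\mu_j = 0$ for $j > n$; imposing dominance with respect to $B_1^0$ and $B_2$ forces $\lambda_1 \leq \cdots \leq \lambda_n \leq 0$. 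Setting $\nu := -\lambda$ converts this into a polynomial partition $\nu_1 \geq \cdots \geq \nu_n \geq 0$, and the resulting irreducible of $G$ is identified as $\pi_\nu^{\GL(m,\C)} \otimes (\pi_\nu^{\GL(n,\C)})^\vee$, where the first factor is the polynomial representation of $\GL(m,\C)$ with highest weight $(\nu_1,\ldots,\nu_n,0,\ldots,0)$. The lemma therefore gives
$$\mathcal{S}(X) \;\subseteq\; \bigoplus_\nu \pi_\nu^{\GL(m,\C)} \otimes (\pi_\nu^{\GL(n,\C)})^\vee.$$

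For the reverse inclusion each summand must actually occur, and the lemma itself does not supply this step. One clean route is to pull back the square-case decomposition of Corollary~\ref{cor smnc} along the $\GL(n,\C) \times \GL(m,\C)$-equivariant projection $M(m \times m, \C) \to X$ extracting the first $n$ rows (with $\GL(n,\C)$ embedded block-diagonally in $\GL(m,\C)$), and then use the classical $\GL(m,\C) \downarrow \GL(n,\C)$ branching rule to sort out which constituents remain after the restriction. Alternatively, one can realize each $\pi_\nu^{\GL(m,\C)} \otimes (\pi_\nu^{\GL(n,\C)})^\vee$ directly inside $\mathcal{S}(X)$ by products of determinants of the $\nu^t_k \times \nu^t_k$ minors of the coordinate matrix, giving the Schur-functor model underlying the Cauchy identity. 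I expect the main technical hurdle to be precisely this reverse inclusion, since the lemma only gives an upper bound; once any of these constructions is in place, translating the mixed-Borel weight data into the standard polynomial/contragredient dictionary is routine.
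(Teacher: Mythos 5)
Your upper bound is the same argument as the paper's: apply the preceding lemma after checking that a mixed Borel has an open orbit through $J = [\,I_n \mid 0\,]$ and computing $T_0$. The only cosmetic difference is that you take $B_1^0 \times B_2$ while the paper takes $B_1 \times B_2^0$; this flips which tensor factor is realized as a polynomial module and which as a contragredient, but is otherwise the same calculation and lands on the same list of constituents.

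For the reverse inclusion, however, your primary suggestion does not quite work as stated, and the paper does something cleaner. You propose pulling back along the linear \emph{surjection} $M(m,\C) \to X$ (first $n$ rows); since pulling back functions along a surjection of spaces gives an \emph{injection} $\mathcal{S}(X) \hookrightarrow \mathcal{S}(M(m,\C))$, this only embeds $\mathcal{S}(X)$ into a known decomposition. That by itself does not tell you which summands of the upper bound actually occur, because a submodule of a semisimple module need not meet every isotypic piece; you would still have to identify the image, and the $\GL(m) \downarrow \GL(n)$ branching rule you invoke does not do that bookkeeping for you. The paper instead uses the opposite arrow: it takes the linear \emph{inclusion} $Y = M(n,\C) \hookrightarrow X$ (the square block of the first $n$ columns), so restriction of functions gives a $\GL(n)\times\GL(n)$-equivariant \emph{surjection} $\mathcal{S}(X) \twoheadrightarrow \mathcal{S}(Y) = \bigoplus_\lambda \pi_\lambda \otimes \pi_\lambda^\vee$; since the upper bound already pins down the possible $\GL(n)\times\GL(m)$-constituents of $\mathcal{S}(X)$, surjecting onto a module in which every $\lambda$ appears forces every summand of the upper bound to occur. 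Your second alternative (explicit highest-weight vectors via products of minors, i.e.\ bideterminants) is a correct and standard route, genuinely different from the paper's, but you leave it as a one-line sketch; if you go that way you should actually exhibit a $B$-eigenvector of the right weight for each $\lambda$ rather than appeal to the Cauchy identity, which would be circular in this setting.
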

	
	\begin{proof}
		Once again, the action of $G$ on $X$ has the property that $X$ has an open $B$-orbit. In fact, if 
		$$B=B_1 \times B_2^0 \subseteq \GL(n, \C) \times \GL(m, \C),$$ where $B_1$ is the group of upper triangular matrices in $\GL(n, \C)$ and $B_2^0$ is the group of lower triangular matrices in $\GL(m, \C)$, then
		\[B \cdot J \text{ is open, \quad where }
		J= \begin{pNiceArray}{c|c}
			I_{n \times n} &  O
		\end{pNiceArray}.
		\]	
		Furthermore, the subgroup $T_0 \subseteq T_1 \times T_2$ which stabilizes $J$ is 
		$ T_0 = \{ (t_1, t_2)| t_1\in T_1$  arbitrary, and  $ t_2=t_1$  in the first  $n$ co-ordinates and no    condition in the last  $(m-n)$  co-ordinates\}.
		
		Therefore, characters $(\chi_1, \chi_2)$ of $T_1 \times T_2$ which are trivial on $T_0$ exactly give rise to the representations $\pi_{\lambda} \otimes \pi_{\lambda}^{\vee}$. This proves that 
		$$\mathcal{S}(X) \subseteq \bigoplus_{\lambda } \pi_{\lambda} \otimes \pi_{\lambda}^{\vee},$$
		where $\lambda=(\lambda_1 \geq \cdots \geq \lambda_n \geq 0)$.
		To prove the equality, consider the natural subspace $Y=M(n, \C) \subseteq X$, as
		\[
		Y= \begin{pNiceArray}{c|c}
			A & O
		\end{pNiceArray}, \quad A \in M(n, \C).
		\]	 
		
		This gives rise to a surjective map of $\GL(n, \C) \times \GL(n, \C)$-modules:
		$$ \mathcal{S}(X) \longrightarrow \mathcal{S}(Y) \longrightarrow 0,$$
		but by the previous corollary $$\mathcal{S}(Y)= \bigoplus_{\lambda  } \pi_{\lambda} \otimes \pi_{\lambda}^{\vee},$$
		therefore the inclusion \\
		$$\mathcal{S}(X) \subseteq \bigoplus_{\lambda } \pi_{\lambda} \otimes \pi_{\lambda}^{\vee},$$ must also be an equality.
	\end{proof}
	
	Corollary \ref{cor smnc}-\ref{cor Howe} are well known in the literature, see \cite{HoweIsrael} for further details.
	
	\subsection{Proof of Equivalence of Pieri and Branching}\label{subsec equiv pieri branch}
	Let $W$ be an $n$ dimensional vector space over $\C$. Let $V=W\oplus \C$, an $(n+1)$ dimensional space over $\C$. We have the following identity: 
	\begin{equation}\label{eq hom idty}	
		\mathrm{Hom}_{\C}(W,V)= \mathrm{Hom}_{\C}(W,W) \bigoplus \mathrm{Hom}_{\C}(W, \C). 
	\end{equation}

	These are vector spaces over $\C$ with natural action of $\GL(W) \times \GL(V)$ on the left of equation (\ref{eq hom idty}), and $\GL(W) \times \GL(W)$ on the right of equation (\ref{eq hom idty}) on the vector spaces with the second copy of $\GL(W)$ in $\GL(W) \times \GL(W)$ acting trivially on  $\mathrm{Hom}_{\C}(W, \C)$.
	
	Whenever a group operates on a vector space, $U$, it operates on the polynomial algebra $\mathcal{S}(U)$ on the vector space $U$. By Corollary~\ref{cor Howe}, $\mathcal{S}(\mathrm{Hom}
	(W, V))$ is a direct sum of the finite dimensional representations $\pi_{\mu} \otimes \pi_{\mu^+}^{\vee}$ where $\pi_{\mu}$ is any irreducible finite dimensional representation of $\GL(W)$, where $\mu=\{\mu_1 \geq \mu_2 \geq \cdots \mu_n \geq 0\}$ and $\pi_{\mu^+}^{\vee}$ is the dual of the corresponding representation of $\GL(V)=\GL(n+1, \C)$ where $\mu^+=(\mu_1, \mu_2, \ldots, \mu_n, 0)$ is a highest weight of $\GL(n+1, \C)$. Thus (by Corollary~\ref{cor Howe}), 
	\begin{equation}\label{eq shwv}
		\mathcal{S}(\mathrm{Hom}
		(W, V))= \sum_{\mu} \pi_{\mu} \otimes \pi_{\mu^+}^{\vee},
	\end{equation}
	as a representation of $\GL(W) \times \GL(V)$, where $\mu=\{\mu_1 \geq \mu_2 \geq \cdots \mu_n \geq 0\}$.
	On the other hand, by Corollary~\ref{cor smnc}, 
	$$\mathcal{S}(\mathrm{Hom}
	(W, W))= \sum_{\lambda} \pi_{\lambda} \otimes \pi_{\lambda}^{\vee},$$
	as a representation of $\GL(W) \times \GL(W)$, where $\lambda=(\lambda_1 \geq \cdots \geq \lambda_n \geq 0)$, and $$\mathcal{S}(\mathrm{Hom}
	(W, \C))= \mathcal{S}(W),$$
	as a $\GL(W) \times \GL(W)$ in which the second $\GL(W)$ acts trivially.
	Therefore, as a $\GL(W) \times \GL(W)$ module:
	\begin{align}
		\mathcal{S}\left(\mathrm{Hom}
		(W, W) \bigoplus \mathrm{Hom}
		(W, \C) \right) & \cong \left(\sum_{\lambda} \pi_{\lambda} \otimes \pi_{\lambda}^{\vee}\right) \bigotimes \left(\mathcal{S}(W) \otimes \C \right) \nonumber \\
		& \cong \sum_{\lambda} \left(\pi_{\lambda} \otimes \mathcal{S}(W)  \right) \otimes \pi_{\lambda}^{\vee} 
		\label{pisw}.
	\end{align}
	
	Equating equations (\ref{eq shwv} ) and (\ref{pisw}) as $\GL(W) \times \GL(W)$ modules, we have:
	
	\begin{equation}\label{equality big}
		\sum_{\mu} \pi_{\mu} \otimes \pi_{\mu^+}^{\vee} \cong \sum_{\lambda} \left(\pi_{\lambda} \otimes \mathcal{S}(W)  \right) \otimes \pi_{\lambda}^{\vee}
	\end{equation}
	where $\pi_{\mu}$ is a representation of $\GL(W)$ and $\pi_{\mu^{+}}$ is a representation of $\GL(V)$ with $\mu^+=(\mu_1, \mu_2, \ldots, \mu_n, 0)$, thus adding a zero at the end of $\mu$.

	Instead of considering the action of $\GL(W) \times \GL(V)$ on the space $X$ of $n \times (n+1)$ matrices, given by $g_1Xg_2^{-1}$, we will consider the action of $\GL(W) \times \GL(V)$ on the space of $n \times (n+1)$ matrices given by $g_1X (\leftidx{^t}{g_2})$, which removes taking duals in equation~(\ref{equality big}), which for the new action of $\GL(W) \times \GL(V)$ becomes equation~(\ref{equality after}) as the equation~(\ref{equality big}) without duals: 
	
	\begin{equation}\label{equality after}
		\sum_{\mu } \pi_{\mu} \otimes \pi_{\mu^{+}} = \sum_{\lambda} \left(\pi_{\lambda} \otimes S(W)  \right) \otimes \pi_{\lambda}.
	\end{equation}
	Decompose 
	$$\pi_{\mu^+}|_{\GL(n, \C)}= \sum \pi_{\lambda}, $$ and rewrite equation~(\ref{equality after}) as:
	\begin{equation}\label{eqn: equivalance equation}
		\sum_{\pi_{\lambda} \subseteq \pi_{\mu^+}|_{\GL(n, \C)}} \pi_{\mu} \otimes \pi_{\lambda} =  \sum_{\pi_{\mu} \subseteq \pi_{\lambda}\otimes S(W)} \pi_{\mu} \otimes \pi_{\lambda}.
	\end{equation}
	The decomposition of RHS is the Pieri Rule according to which 
	$\pi_{\mu} \otimes \pi_{\lambda}$, a representation of $\GL(n, \C) \times \GL(n, \C)$, appears in RHS $\iff$ one can obtain $\mu$ from $\lambda$ by adding a skew Young diagram to $\lambda$ consisting of a horizontal strip. 
	
	On the other hand,  $\pi_{\mu} \otimes \pi_{\lambda}$, a representation of $\GL(n, \C) \times \GL(n, \C)$,
	appears in LHS $\iff$ $\pi_{\lambda} \subseteq \pi_{\mu^+}|_{\GL(n, \C)}$
	$\iff$ one can obtain $\mu$ from $\lambda$
	by adding a skew Young diagram to $\lambda$ consisting of a horizontal strip.
	
	This proves equivalence of the Pieri rule for $\GL(n, \C)$ with that of branching law for $\pi_{\lambda^+}$, a representation of $\GL(n+1, \C)$. Since any irreducible representation of $\GL(n+1, \C)$ can be assumed to be of the from  $\pi_{\lambda^+}$, by twisting by the determinant character, proving equivalence of Pieri rule with that of branching rule from  $\GL(n+1, \C)$ to $\GL(n, \C)$.

	It may be noted that the equality 
	$$\sum_{\pi_{\lambda} \subseteq \pi_{\mu^+}|_{\GL(n, \C)}} \pi_{\mu} \otimes \pi_{\lambda} =  \sum_{\pi_{\mu} \subseteq \pi_{\lambda}\otimes S(W)} \pi_{\mu} \otimes \pi_{\lambda}$$
	seems not amenable to be written term-by-term. It is a certain reciprocity which can be expressed as 
	\begin{equation}\label{eqn: reciprocity}
		\mathrm{Hom}_{\GL(W)}\left(\pi_{\lambda} \otimes S(W), \pi_{\mu} \right) \cong  \mathrm{Hom}_{\GL(W)}(\pi_{\mu^+}, \pi_{\lambda}) 
	\end{equation}
	where we recall that $\pi_{\mu^+}$ is a representation of $\GL(V)$. This can also be written as the following seesaw diagram 
	
	\[ \xymatrix{
		\pi_{\mu^+} \ar@{-}[d] \ar@{-}[rd] & \pi_{\lambda} \otimes S(W) \ar@{-}[d] \ar@{-}[ld] \\%
		\pi_{\lambda} & \pi_{\mu}
	}
	\]
	in which $\pi_{\mu^+}$ is a representation of $\GL(V)$ restricted to $\GL(W)$ but all the other terms are representations of $\GL(W)$, for the following diagram of groups:
	\[ \xymatrix{
		\GL(n+1) \ar@{-}[d] \ar@{-}[rd] & \GL(n) \otimes \GL(n) \ar@{-}[d] \ar@{-}[ld] \\%
		\GL(n) \otimes \GL(1) & \GL(n) 
	}
	\]
	
	$$  \mathrm{Hom}_{\GL(W)}(\pi_{\mu^+}, \pi_{\lambda})\cong \mathrm{Hom}_{\GL(W)}\left(\pi_{\lambda} \otimes \Sym^k, \pi_{\mu} \right)   .$$
	
\vspace{6mm}
 \noindent \textbf{Acknowledgment} 
 The author thanks Prof. Dipendra Prasad for carefully reading the manuscript and pointing out several corrections. Part of this work was carried out as a component of the author's doctoral thesis at IIT Bombay under the supervision of Prof. Dipendra Prasad, and another part was completed at the Chennai Mathematical Institute. The author gratefully acknowledges partial financial support from the Infosys Foundation during the course of this research.

\end{document}